\documentclass[11pt,a4paper,reqno]{amsart}

\usepackage[T1]{fontenc}
\usepackage[english]{babel}
\usepackage{mathtools}
\usepackage{amssymb}
\usepackage{libertinus}
\usepackage[cal=boondoxo,bb=ams]{mathalfa}
\usepackage{microtype}
\usepackage{enumitem}
\usepackage{etoolbox}
\usepackage{needspace}
\usepackage{xcolor}

\allowdisplaybreaks[2]
\usepackage[
  a4paper,
  left=30mm,
  right=30mm,
  top=27mm,
  bottom=30mm,
  headsep=8mm,
  footskip=13mm,
  heightrounded
]{geometry}

\setlist[itemize]{leftmargin=2.1em,itemsep=0.25em,topsep=0.45em,parsep=0pt}
\setlist[enumerate]{leftmargin=2.1em,itemsep=0.25em,topsep=0.45em,parsep=0pt}

\numberwithin{equation}{section}

\makeatletter
\def\@settitle{%
  \begin{center}
    \vspace*{-0.4em}
    {\normalfont\bfseries\fontsize{17}{21}\selectfont\@title\par}
    \vspace{0.7em}
  \end{center}}
\renewcommand\section{\@startsection{section}{1}{\z@}%
  {1.5\baselineskip plus 0.2\baselineskip minus 0.1\baselineskip}%
  {0.65\baselineskip}{\normalfont\Large\bfseries}}
\renewcommand\subsection{\@startsection{subsection}{2}{\z@}%
  {1.15\baselineskip plus 0.2\baselineskip minus 0.1\baselineskip}%
  {0.45\baselineskip}{\normalfont\large\bfseries}}
\renewcommand\subsubsection{\@startsection{subsubsection}{3}{\z@}%
  {0.9\baselineskip plus 0.15\baselineskip minus 0.1\baselineskip}%
  {0.35\baselineskip}{\normalfont\normalsize\bfseries}}
\makeatother

\makeatletter
\patchcmd{\@setauthors}{\centering\footnotesize}{\centering\normalsize}{}{}
\patchcmd{\@setauthors}{\MakeUppercase{\authors}}{\authors}{}{}
\patchcmd{\maketitle}{\uppercasenonmath\shorttitle}{}{}{}
\patchcmd{\maketitle}{\@nx\MakeUppercase{\the\toks@}}{\the\toks@}{}{}
\makeatother

\usepackage{hyperref}
\hypersetup{
  hidelinks,pdfencoding=auto,
  pdftitle={Loss of derivatives for hyperbolic operators with coefficients oscillating in time},
  pdfauthor={Wenhui Chen and Michael Reissig},
  pdfkeywords={second-order hyperbolic equations, time oscillations, Lipschitz regularity, energy estimates, loss of derivatives}
}
\usepackage[nameinlink,capitalise,noabbrev]{cleveref}

\theoremstyle{plain}
\newtheorem{theorem}{Theorem}[section]
\newtheorem{lemma}[theorem]{Lemma}
\newtheorem{prop}[theorem]{Proposition}

\newtheorem{hypoth}{Hypothesis}[section]
\theoremstyle{definition}
\newtheorem{defn}{Definition}[section]
\theoremstyle{remark}
\newtheorem{remark}{Remark}[section]

\newcommand{\ml}{\mathcal}
\newcommand{\mb}{\mathbb}
\newcommand{\dd}{\,\mathrm{d}}
\DeclareMathOperator{\divv}{div}

\title[Hyperbolic operators with oscillating coefficients]
  {Loss of derivatives for hyperbolic operators with coefficients oscillating in time}
\author[W. Chen]{Wenhui Chen}
\address{School of Mathematics and Information Science, Guangzhou University,
  Guangzhou 510006, P. R. China}
\email{wenhui.chen.math@gmail.com}
\author[M. Reissig]{Michael Reissig}
\address{Institute of Applied Analysis, Faculty of Mathematics and Computer Science,
  Technical University Bergakademie Freiberg, 09596 Freiberg, Germany}
\email{reissig@math.tu-freiberg.de}
\keywords{second-order hyperbolic equations; time oscillations; Lipschitz regularity; energy estimates; loss of derivatives}
\subjclass[2020]{Primary 35L10; Secondary 35B45, 35B65, 35L15}
\date{}

\begin{document}
\raggedbottom
\begin{abstract}
We consider second-order strictly hyperbolic operators whose coefficients are Lipschitz continuous in space and may oscillate near the initial time.
Their first two time derivatives are bounded by the corresponding powers of $\frac{1}{t}(\log\frac{1}{t})^\alpha$ with $0\leqslant\alpha<1$, and the first time derivative satisfies the corresponding spatial Lipschitz bound.
By combining Tarama's energy with frequency-dependent weights, we prove an energy estimate without loss for $\alpha=0$ and with a stretched logarithmic upper bound for the loss when $0<\alpha<1$, which implies an arbitrarily small loss in the Sobolev scale.
These estimates extend the analysis of Colombini, Del Santo and Fanelli [\emph{Ann. Scuola Norm. Sup. Pisa Cl. Sci. (5)} \textbf{26} (2025)] for spatially Lipschitz coefficients to time-derivative bounds with an additional logarithmic factor.
\end{abstract}
\maketitle

\section{Introduction}\label{Section-Introduction}
For a fixed $T\in(0,1)$, we consider the strictly hyperbolic equation
\begin{align}\label{Eq-hyperbolic-matrix}
\ml{L}u:=\partial_t^2u-\divv \bigl(A(t,x)\nabla u\bigr)=0 \ \ \text{for}\ \ (t,x)\in(0,T]\times\mb{R}^n,
\end{align}
where the real symmetric coefficient matrix $A(t,x)=\big(a_{kl}(t,x)\big)_{k,l=1}^n$ is defined on the strip $[0,T]\times\mb{R}^n$ and satisfies
\begin{align*}
	\lambda_0\mathrm{I}_n\leqslant A(t,x)\leqslant\Lambda_0\mathrm{I}_n
\end{align*}
for constants $\lambda_0,\Lambda_0>0$, where $\mathrm{I}_n$ is the $n$-dimensional identity matrix.
In component form, the operator $\ml{L}$ in \eqref{Eq-hyperbolic-matrix} is given by
\begin{align}\label{L-operator}
\ml{L}u=\partial_t^2u-\sum\limits_{k,l=1}^n\partial_k\bigl(a_{kl}(t,x)\partial_lu\bigr),
\end{align}
where the coefficients $a_{kl}=a_{kl}(t,x)$ satisfy $a_{kl}=a_{lk}$ for all $k,l\in\{1,\dots,n\}$ and
\begin{align}\label{Property-akl-1}
\lambda_0|\xi|^2\leqslant\sum\limits_{k,l=1}^na_{kl}(t,x)\xi_k\xi_l\leqslant\Lambda_0|\xi|^2
\end{align}
for any $(t,x)\in[0,T]\times\mb{R}^n$ and any $\xi\in\mb{R}^n$, which guarantees strict hyperbolicity.
The aim of this paper is to study the loss of derivatives in energy estimates under Hypothesis \ref{Hypothesis-A}, where the time oscillations of the coefficients $a_{kl}$ are controlled by $\frac{1}{t}(\log\frac{1}{t})^\alpha$ with $0\leqslant\alpha<1$.

The influence of low regularity of the coefficients on solvability of hyperbolic Cauchy problems has been studied since \cite{Colombini-De Giorgi-Spagnolo=1979}.
Weakening the time regularity may produce a loss of derivatives, while spatial dependence requires suitable regularity assumptions, as in \cite{Colombini-Lerner=1995}.
Such a loss may also occur for coefficients which are smooth for $t>0$ but oscillate infinitely often as $t\downarrow0$, so bounds on their time derivatives provide another way to study this problem.

Under suitable spatial smoothness assumptions on the coefficients and their first two time derivatives, continuity at $t=0$, and the bounds
\begin{align*}
t\big|\partial_ta_{kl}(t,x)\big|+t^2\big|\partial_t^2a_{kl}(t,x)\big|\leqslant C,
\end{align*}
well-posedness without loss of derivatives was first obtained in \cite{Yamazaki=1990}.
For coefficients depending only on $t$, well-posedness with a finite loss was proved in \cite{Colombini-DelSanto-Kinoshita=2002} under the first-derivative bound $t|\partial_ta_{kl}(t)|\leqslant C$.
An additional logarithmic factor in the bounds for the first two time derivatives was allowed in \cite{Colombini-DelSanto-Reissig=2003}, where a finite loss of derivatives was also obtained.
We refer to \cite{Hirosawa=2003-01,Hirosawa=2003-02,Hirosawa-Reissig=2004} for related results on oscillations and loss of regularity.
For propagation speeds depending only on time, the relation between singular derivative bounds, continuity assumptions and derivative loss was studied in \cite{Ghisi-Gobbino=2021,Ghisi-Gobbino=2023}, including optimality results. For coefficients depending on both $t$ and $x$, the spatial regularity was weakened to Lipschitz or log-Lipschitz continuity in \cite{Colombini-DelSanto-Fanelli=2023}, while retaining the bounds
\begin{align*}
\big|\partial_ta_{kl}(t,x)\big|\lesssim t^{-1}\ \ \text{and}\ \ \big|\partial_t^2a_{kl}(t,x)\big|\lesssim t^{-2}.
\end{align*}
With the corresponding spatial control of $\partial_ta_{kl}(t,x)$, an energy estimate without loss in $H^{1-\theta}\times H^{-\theta}$ was proved in \cite[Theorem 2.1]{Colombini-DelSanto-Fanelli=2023} for $\theta\in[0,1)$ in the Lipschitz case.
In the log-Lipschitz case, \cite[Theorem 2.2]{Colombini-DelSanto-Fanelli=2023} gives a local in-time estimate with a loss increasing linearly in time for $\theta\in(0,1)$.
The proof in \cite{Colombini-DelSanto-Fanelli=2023} controls the time oscillations under these spatial regularity assumptions by using the lower-order energy correction introduced in \cite{Tarama=2007} together with para-differential calculus with parameters.
This combination of methods was also used in \cite{Colombini-DelSanto-Fanelli-Metivier=2013,Colombini-DelSanto-Fanelli-Metivier=2013-2}.
Coefficients depending only on $t$ with bounds of the form
\begin{align*}
	\big|\partial_t^pa_{kl}(t)\big|\lesssim\Big(\frac{1+|\log t|^\alpha}{t}\Big)^p\ \ \text{for}\ \ p\in\{1,2\}
\end{align*}
are further discussed at the end of \cite[Section 2]{Colombini-DelSanto-Fanelli=2023}.
These bounds give no loss for $\alpha=0$, an arbitrarily small loss for $0<\alpha<1$, and a finite loss for $\alpha=1$. To the best of our knowledge, the corresponding estimates for $0<\alpha<1$ have not been established for coefficients depending on both $t$ and $x$ under spatial Lipschitz assumptions.
Since the logarithmic factor introduces frequency-dependent growth into the localized energy bounds, extending the argument of \cite{Colombini-DelSanto-Fanelli=2023} requires additional control of the weight ratios in the commutator and remainder estimates coupling different dyadic blocks.

To describe the time-derivative bounds, we set
\begin{align*}
h(t;\alpha):=\frac{1}{t}\Big(\log\frac{1}{t}\Big)^{\alpha}.
\end{align*}
\begin{hypoth}\label{Hypothesis-A}
Let $\alpha\in[0,1)$, and let $a_{kl}: [0,T]\times\mb{R}^n\to\mb{R}$ be twice differentiable with respect to $t$ on $(0,T]\times\mb{R}^n$ and satisfy the following assumptions:
\begin{description}
	\item[(H1)] There exists $C_0>0$ such that for all $k,l\in\{1,\dots,n\}$,
	\begin{align*}
	\big|a_{kl}(t,x+y)-a_{kl}(t,x)\big|\leqslant C_0|y|
	\end{align*}
	holds for any $t\in[0,T]$ and any $x,y\in\mb{R}^n$ with $|y|\in(0,1)$.
	\item[(H2)] There exist $C_1,C_2,C_3>0$ such that for all $k,l\in\{1,\dots,n\}$,
	\begin{align*}
	\big|\partial_t^pa_{kl}(t,x)\big|&\leqslant C_p[h(t;\alpha)]^p\ \ \text{with}\ \ p\in\{1,2\},\\
	\big|\partial_ta_{kl}(t,x+y)-\partial_ta_{kl}(t,x)\big|&\leqslant C_3|y|h(t;\alpha)
	\end{align*}
	hold for any $t\in(0,T]$ and any $x,y\in\mb{R}^n$ with $|y|\in(0,1)$.
\end{description}
\end{hypoth}

\begin{defn}[Weighted Sobolev spaces]\label{Definition-Weighted-Sobolev}
For $s\in\mb{R}$, $K>0$ and $\alpha\in[0,1)$, we define the weighted Sobolev space $H^s_{\alpha,K}$ as the space of all $f\in\ml{S}'$ such that
\begin{align}\label{Weighted-Sobolev-Norm}
\|f\|_{H^s_{\alpha,K}}^2:=\sum\limits_{j=0}^{+\infty}\exp\bigl(-K(j+1)^{\alpha}\bigr)2^{2sj}\|\Delta_jf\|_{L^2}^2<+\infty,
\end{align}
where $\Delta_j$ are the standard dyadic blocks defined by \eqref{Definition-Dyadic-Blocks}.
\end{defn}

\begin{itemize}
\item If $\alpha=0$, this norm is equivalent to the classical $H^s$-norm (see \eqref{EQUIV} or \cite[Chapter 2]{Bahouri-Chemin-Danchin=2011}).
\item If $\alpha\in(0,1)$, up to a change of the positive constant, the loss is measured by
\begin{align*}
\exp\bigl(-C(\log\langle D_x\rangle)^\alpha\bigr).
\end{align*}
For related exponentially weighted Sobolev spaces, see \cite{Cicognani-Lorenz=2018}.
\end{itemize}

\begin{theorem}[Energy estimates]\label{Thm-1}
Let $\ml{L}$ be defined by \eqref{L-operator}, and assume that its coefficients $a_{kl}$ satisfy \eqref{Property-akl-1} and Hypothesis \ref{Hypothesis-A}. Fix $\alpha\in[0,1)$ and $\theta\in(0,1)$. Then there exist constants $K,C>0$, depending only on $n$, $\lambda_0$, $\Lambda_0$, $C_0,C_1,C_2,C_3$, $T$, $\alpha$, $\theta$ and the fixed cut-off functions, such that every $u\in\ml{C}^2([0,T],H^{\infty})$ satisfies
\begin{align}\label{Energy-Est}
&\sup\limits_{t\in[0,T]}\Bigl(\|u(t,\cdot)\|_{H^{1-\theta}_{\alpha,K}}+\|\partial_tu(t,\cdot)\|_{H^{-\theta}_{\alpha,K}}\Bigr)\notag\\
&\qquad\leqslant C\Big(\|u(0,\cdot)\|_{H^{1-\theta}}+\|\partial_tu(0,\cdot)\|_{H^{-\theta}}+\int_0^T\|\ml{L}u(\tau,\cdot)\|_{H^{-\theta}}\,\mathrm{d}\tau\Big).
\end{align}
In particular, if $\alpha\in(0,1)$, then for every $\epsilon\in(0,1-\theta)$ there exists $C_{\epsilon}>0$ such that
\begin{align}\label{Energy-Est-Sobolev}
&\sup\limits_{t\in[0,T]}\Bigl(\|u(t,\cdot)\|_{H^{1-\theta-\epsilon}}+\|\partial_tu(t,\cdot)\|_{H^{-\theta-\epsilon}}\Bigr)\notag\\
&\qquad\leqslant C_{\epsilon}\Big(\|u(0,\cdot)\|_{H^{1-\theta}}+\|\partial_tu(0,\cdot)\|_{H^{-\theta}}+\int_0^T\|\ml{L}u(\tau,\cdot)\|_{H^{-\theta}}\,\mathrm{d}\tau\Big).
\end{align}
\end{theorem}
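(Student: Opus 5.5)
We follow the strategy of \cite{Colombini-DelSanto-Fanelli=2023}: Littlewood--Paley localization, Tarama's lower-order energy correction, and a splitting of the time interval at a frequency-dependent time. The new ingredient is a dyadic weight $\exp(-K(j+1)^\alpha)$ that absorbs the extra logarithmic growth.

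\emph{Regularization and localization.} First we regularize the coefficients in $x$ at scale $\epsilon$, writing $a_{kl,\epsilon}=\rho_\epsilon * a_{kl}$. Under (H1) and (H2) this gives
\begin{align*}
|a_{kl,\epsilon}-a_{kl}|\lesssim \epsilon,\qquad |\nabla_x a_{kl,\epsilon}|\lesssim 1,\qquad |\nabla_x^2 a_{kl,\epsilon}|\lesssim \epsilon^{-1},\qquad |\nabla_x\partial_t a_{kl,\epsilon}|\lesssim h(t;\alpha).
\end{align*}
We choose $\epsilon=2^{-j}$ on the block $\Delta_j$, following the para-differential calculus with parameters. We then apply $\Delta_j$ to $\ml{L}u=f$ and obtain
\begin{align*}
\partial_t^2u_j-\divv(A\nabla u_j)=\Delta_jf+[\Delta_j,\divv A\nabla]u.
\end{align*}
The commutator is handled by Bony's decomposition, exactly as in the Lipschitz case.

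\emph{Energy on each block.} For the dyadic block we split $[0,T]$ at $t_j$ defined by $t_j\,2^j=(\log 2^j)^\alpha$, or simply $t_j=2^{-j}j^{\alpha}$.
\begin{itemize}
\item On $[0,t_j]$ we use the plain energy $|\partial_tu_j|^2+2^{2j}|u_j|^2$. Its growth is bounded by $\exp\bigl(\int_0^{t_j}2^j\,\mathrm{d}\tau\bigr)\le \exp(Cj^\alpha)$. The factor $j^\alpha$ is tolerable because it is exactly what the weight absorbs.
\item On $[t_j,T]$ we use Tarama's energy
\begin{align*}
E_j=\bigl|\partial_tu_j+\tfrac{\partial_t\alpha_j}{2\alpha_j}u_j\bigr|^2+\alpha_j^2|u_j|^2,\qquad \alpha_j^2=\bigl(A_\epsilon\nabla u_j,\nabla u_j\bigr)/|u_j|^2\sim 2^{2j},
\end{align*}
realized operatorially via $(A_\epsilon\nabla u_j,\nabla u_j)_{L^2}$ together with the correction term $\mathrm{Re}(\partial_t(A_\epsilon)\nabla u_j,\dots)$ divided by $2^{2j}$.
\end{itemize}
Differentiating $E_j$ produces terms bounded by $h(t;\alpha)^2 2^{-j}$ from $\partial_t^2a$, and by $2^{-j}h$ times spatial commutator factors. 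These contribute
\begin{align*}
\int_{t_j}^T h^2\,2^{-j}\,\mathrm{d}\tau\lesssim 2^{-j}\,\frac{(\log 1/t_j)^{2\alpha}}{t_j}\sim j^{\alpha}.
\end{align*}
The Lipschitz regularization error contributes $\int|a-a_\epsilon|\,2^j\lesssim T$. The result is a Gronwall bound of the form $E_j(t)\le e^{C_*(j+1)^\alpha}\bigl(E_j(0)+\dots\bigr)$. For $\alpha=0$ this is a uniform constant, recovering \cite{Colombini-DelSanto-Fanelli=2023}.

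\emph{Summation: the main obstacle.} The commutator and remainder terms couple $u_j$ with $u_{j'}$ for $|j-j'|\le N$, and also, through the low-frequency parts of paraproducts, with all $j'<j$. The estimate cannot be closed blockwise. Instead we run a single Gronwall argument for the total weighted energy
\begin{align*}
\ml{E}(t)=\sum_j e^{-K(j+1)^\alpha}\,2^{-2\theta j}E_j(t).
\end{align*}
This requires comparing the weights across blocks:
\begin{itemize}
\item For $|j-j'|\le N$ the ratio $e^{-K(j+1)^\alpha+K(j'+1)^\alpha}$ is bounded uniformly, since $(j+1)^\alpha-(j'+1)^\alpha\lesssim N j^{\alpha-1}$ and $\alpha<1$.
\item For the remainder sums over $j'\ge j$ the weight ratio is $\le1$.
\item For the low-frequency sums over $j'<j$ the ratio grows at most like $e^{Kj^\alpha}$. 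This is compensated by the factor $2^{-(1-\theta)(j-j')}$, which is available because $\theta\in(0,1)$. That is precisely where $\theta>0$ and $\theta<1$ are used.
\end{itemize}
The delicate point, which we attempt first, is that the time-dependent growth $e^{C_*(j+1)^\alpha}$ must be absorbed uniformly. To do this we build the weight into the energy as $e^{-\beta(t)(j+1)^\alpha}$, with $\beta$ increasing from $0$ to $K$ at rate matched to $h(t)\,t_j$. The derivative of the weight then gives a negative term that dominates the bad terms, including those in the initial layer $[0,t_j]$. Once $K$ is large relative to $C_*$, Gronwall yields \eqref{Energy-Est} with $\beta(0)=0$ on the right-hand side, so the unweighted data norms appear there.

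Finally, \eqref{Energy-Est-Sobolev} follows from the bound $e^{-K(j+1)^\alpha}\ge c_\epsilon 2^{-2\epsilon j}$, which holds since $(j+1)^\alpha=o(j)$.
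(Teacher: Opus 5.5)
Your architecture matches the paper's: an initial layer of length $t_j\approx 2^{-j}j^\alpha$ (the paper's $J_j$) costing $e^{Cj^\alpha}$, Tarama's energy afterwards with $2^{-j}\int_{t_j}^T h^2\,\mathrm{d}\tau\lesssim j^\alpha$, and a weighted total energy. The summation step, however, has a genuine gap. The cross-block comparisons you check are for the time-independent weight $e^{-K(j+1)^\alpha}$, and that weight cannot absorb the bad terms. Block $j$ carries the rate $q_j(t)\approx 2^j$ on $[0,t_j]$ and $2^{-j}h(t;\alpha)^2$ on $[t_j,T]$. Since $\sup_j q_j(t)/(j+1)^\alpha\approx 1/t$ (attained at the block with $t_j\approx t$), a common profile $e^{-\beta(t)(j+1)^\alpha}$ would need $\beta'(t)\gtrsim 1/t$, and then $\beta$ could not stay below $K$. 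The phase must therefore depend on $j$, as in the paper's $G_j(t)=M(j+1)^\alpha\frac{t}{t+J_j}$. But then adjacent blocks are at different stages of their phases at the same time. At $t=J_N$ one finds $G_N(t)-G_{N-1}(t)\approx\frac{M}{6}(N+1)^\alpha\approx c(\log\frac{1}{t})^\alpha$, so $\omega_{N-1}(t)/\omega_N(t)$ is unbounded as $t\downarrow0$ even though the indices differ by $1$. Your bound via $(j+1)^\alpha-(j'+1)^\alpha\lesssim Nj^{\alpha-1}$ therefore does not control the commutator and paraproduct couplings in the Gronwall argument you actually have to run. Your off-diagonal discussion is also reversed. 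With weights decreasing in $j$, it is the inputs $j'>j$ whose ratio exceeds $1$. Moreover, a bound $e^{Kj^\alpha}$ is not compensated by $2^{-(1-\theta)|j-j'|}$ when $|j-j'|\lesssim j^\alpha$; you need $e^{K|j-j'|^\alpha}$, which comes from the concavity of $s\mapsto s^\alpha$.

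The missing ingredient is the paper's phase comparison. Define the transition index $N(t)$ by $J_N\leqslant t<J_{N-1}$. Then $N\lesssim 1+\log\frac{1}{t}=:\ell(t)$ and $t2^N\lesssim(N+1)^\alpha$. It follows that $G_k(t)\lesssim(N+1)^\alpha$ for $k<N$ and $|G_k(t)-M(k+1)^\alpha|\lesssim(N+1)^\alpha$ for $k\geqslant N$. Concavity then yields $|G_j(t)-G_m(t)|\leqslant C([\ell(t)]^\alpha+(1+|j-m|)^\alpha)$. Hence the couplings are bounded by $\rho_\alpha(t)\ml{E}_\theta(t,u)$ with $\rho_\alpha(t)=C\exp(C[\ell(t)]^\alpha)$, which is integrable on $(0,T)$ precisely because $\alpha<1$ (substitute $s=\log\frac{1}{t}$). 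The off-diagonal sums close by Young's inequality, since $2^{-\delta_\theta|r|}e^{C(1+|r|)^\alpha}\in\ell^1(\mb{Z})$ with $\delta_\theta=\min\{\theta,1-\theta\}$. Gronwall is then applied with the integrable factor $1+\rho_\alpha$. This time-integrability is where $\alpha<1$ is really used, and it is absent from your plan.

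There is also a secondary problem with your energy. In the scalar model $u_j''+\alpha_j^2u_j=0$ it satisfies $E_j'=\frac{\partial_t\alpha_j}{\alpha_j}E_j+O(2^{-j}h^2)E_j$, where $|\partial_t\alpha_j/\alpha_j|$ is only bounded by $Ch(t;\alpha)$, whose integral over $[t_j,T]$ is of order $j^{1+\alpha}$. For scalars one simply uses $E_j/\alpha_j$. For $x$-dependent matrices, this normalization must be built in at the operator level, as in the paper's $v_j,w_j$ with the symmetric factors $\varpi_J^{-1/2}$ and $\varpi_J^{1/2}(\gamma^2+|\xi|^2)^{1/2}$. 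This also requires Lipschitz symbolic calculus using the spatial Lipschitz bound on $\partial_ta_{kl}$. A correction normalized by $2^{2j}$ leaves a residual of size $h(t;\alpha)E_j$, because $\sum_{k,l}a_{kl}\xi_k\xi_l$ is only comparable to $2^{2j}$ on the block, not equal to it.
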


\begin{remark}\label{Remark-Cauchy}
For $0<\alpha<1$, applying the usual regularization and density arguments to \eqref{Energy-Est-Sobolev} gives well-posedness of the Cauchy problem with an arbitrarily small loss of derivatives.
Here, for initial data in $H^{1-\theta}\times H^{-\theta}$ and a forcing term in $L^1((0,T),H^{-\theta})$, the solution belongs to
\begin{align*}
\ml{C}\big([0,T],H^{1-\theta-\epsilon}\big)\cap\ml{C}^1\big([0,T],H^{-\theta-\epsilon}\big)
\end{align*}
for every $0<\epsilon<1-\theta$.
For $\alpha=0$, \eqref{Energy-Est} gives well-posedness without loss, as shown in \cite[Theorem 2.1]{Colombini-DelSanto-Fanelli=2023}.
\end{remark}

\begin{remark}\label{Remark-loss-endpoints}
 For $\alpha=0$, the estimate \eqref{Energy-Est} indicates no loss of derivatives. The endpoint $\theta=0$ in this case follows from \cite[Theorem 2.1]{Colombini-DelSanto-Fanelli=2023}. For $\alpha\in(0,1)$, our estimate gives a stretched logarithmic upper bound for the loss. When we express this bound as a variable Sobolev order, the logarithmic power in that order is $\alpha-1$, since, for $|\xi|>0$,
\begin{align*}
\langle\xi\rangle^{-C(\log\langle\xi\rangle)^{\alpha-1}}
=\exp\bigl(-C(\log\langle\xi\rangle)^\alpha\bigr).
\end{align*}
For coefficients depending only on $t$, finite-loss results for $\alpha=1$ are known from \cite{Colombini-DelSanto-Reissig=2003,Hirosawa=2003-01,Hirosawa=2003-02,Hirosawa-Reissig=2004}.
\end{remark}

\begin{remark}\label{Remark-example}
The assumptions allow coefficients without a limit as $t\downarrow0$.
For example, let $b\in W^{1,\infty}$ be real-valued and nonzero such that $\|b\|_{L^\infty}\leqslant1$, and let $0<\kappa<\frac12$.
For $t>0$, set
\begin{align*}
A(t,x):=\Big[1+\kappa b(x)\sin\Big(\Big(\log\frac{1}{t}\Big)^{1+\alpha}\,\Big)\Big]\mathrm{I}_n,
\end{align*}
and define $A(0,x):=\mathrm{I}_n$.
Because of $\|b\|_{L^\infty}\leqslant1$ and $0<\kappa<\frac12$, we have
\begin{align*}
	(1-\kappa)\mathrm{I}_n\leqslant A(t,x)\leqslant(1+\kappa)\mathrm{I}_n.
\end{align*}
Writing $\psi(t):=(\log\frac{1}{t})^{1+\alpha}$, one may arrive at
\begin{align*}
\psi'(t)&=-\frac{1+\alpha}{t}\Big(\log\frac{1}{t}\Big)^\alpha=-(1+\alpha)h(t;\alpha),\\
\psi''(t)&=\frac{1+\alpha}{t^2}\Big[\Big(\log\frac{1}{t}\Big)^\alpha+\alpha\Big(\log\frac{1}{t}\Big)^{\alpha-1}\,\Big]
\leqslant C[h(t;\alpha)]^2,
\end{align*}
in which the last inequality follows from $\log\frac{1}{t}\geqslant\log\frac{1}{T}>0$ for $t\in(0,T]$, with $C$ depending on $T$ and $\alpha$.
The chain rule shows
\begin{align*}
\partial_tA(t,x)&=\kappa b(x)\psi'(t)\cos\psi(t)\,\mathrm{I}_n,\\
\partial_t^2A(t,x)&=\kappa b(x)\big(\psi''(t)\cos\psi(t)-[\psi'(t)]^2\sin\psi(t)\big)\mathrm{I}_n,
\end{align*}
so $\|b\|_{L^\infty}\leqslant1$ gives
\begin{align*}
\big\|\partial_t^pA(t,\cdot)\big\|_{L^\infty}\leqslant C[h(t;\alpha)]^p\ \ \text{for}\ \ p\in\{1,2\}.
\end{align*}
Moreover, taking the Lipschitz representative of $b\in W^{1,\infty}$, one finds
\begin{align*}
	|b(x+y)-b(x)|\leqslant\|\nabla b\|_{L^\infty}|y|
\end{align*} and, hence,
\begin{align*}
\big|a_{kl}(t,x+y)-a_{kl}(t,x)\big|&\leqslant\kappa\|\nabla b\|_{L^\infty}|y|,\\
\big|\partial_ta_{kl}(t,x+y)-\partial_ta_{kl}(t,x)\big|&\leqslant\kappa\|\nabla b\|_{L^\infty}|y|\,|\psi'(t)|\leqslant C|y|h(t;\alpha).
\end{align*}
These estimates verify (H1)--(H2). For $\alpha>0$, the quantity $t\|\partial_tA(t,\cdot)\|_{L^\infty}$ is unbounded as $t\downarrow0$.
\end{remark}

\medskip\paragraph{Notation.}
The positive constants $c$ and $C$ may change from line to line.
We write $f\lesssim g$ if $f\leqslant Cg$ for some $C>0$, define $f\gtrsim g$ analogously, and use $f\sim g$ when both inequalities hold.
We denote by $\ml{S}$ the Schwartz space, by $\ml{S}'$ its dual, and by $\widehat f$ the Fourier transform of $f$ with respect to $x$.
With $D_x:=-i\nabla_x$ and $\langle\xi\rangle^2:=1+|\xi|^2$, the operator $\langle D_x\rangle$ denotes the Fourier multiplier with symbol $\langle\xi\rangle$.
All spatial norms are taken over $\mb{R}^n$.
For $s\in\mb{R}$, the inhomogeneous Sobolev space $H^s$ is equipped with the norm
\begin{align*}
\|f\|_{H^s}:=\|\langle D_x\rangle^sf\|_{L^2},
\end{align*}
and we put $H^\infty:=\bigcap_{s\in\mb{R}}H^s$.
The Lipschitz norm is given by
\begin{align*}
\|a\|_{\mathrm{Lip}}:=\|a\|_{L^\infty}+\|\nabla a\|_{L^\infty}.
\end{align*}
We use the $L^2$ inner product
\begin{align*}
(f,g)_{L^2}:=\int_{\mb{R}^n}f(x)\overline{g(x)}\,\mathrm{d}x,
\end{align*}
and denote the corresponding operator adjoint by the superscript $*$.
The notation $B_R(0)$ denotes the ball of radius $R>0$ centered at the origin in $\mb{R}^n$, and $\chi_I$ denotes the characteristic function of an interval $I$.

\section{Approximation and regularization}\label{Section-Regularisation}
\subsection{\texorpdfstring{Approximation of coefficients in time}{Approximation of coefficients in time}}\label{Section-Regular-in-time}
Since (H2) allows the coefficients to oscillate as $t\downarrow0$, we approximate $a_{kl}$ by coefficients $a_{\varepsilon,kl}$ which are constant near $t=0$ and agree with $a_{kl}$ away from $0$.

Let $\rho_\eta(t):=\eta^{-1}\rho(\frac{t}{\eta})$, where $\rho\in\ml{C}_c^\infty((-1,1))$ is nonnegative and satisfies $\int_{\mb{R}}\rho(s)\dd s=1$. We also choose a smooth function $\eta_0\in\ml{C}^{\infty}(\mb{R})$ valued in $[0,1]$ such that $\eta_0(s)=1$ if $s\leqslant\frac{1}{4}$ and $\eta_0(s)=0$ if $s\geqslant\frac{3}{4}$, and put $\eta_{\varepsilon}(t):=\eta_0(\frac{t}{3\varepsilon}+\frac{1}{12})$. Then, $\eta_{\varepsilon}(t)=1$ if $t\leqslant\frac{\varepsilon}{2}$ and $\eta_{\varepsilon}(t)=0$ if $t\geqslant2\varepsilon$.

Following \cite{Colombini-DelSanto-Fanelli=2023}, we freeze the coefficients outside $[\varepsilon,T]$, smooth the frozen extension in time, and use a cut-off to retain the original coefficients for $t\geqslant2\varepsilon$.
We define
\begin{align*}
a_{\varepsilon,kl}(t,x):=\widetilde{\widetilde{a}}_{\varepsilon,kl}(t,x)\eta_{\varepsilon}(t)+a_{kl}(t,x)\bigl(1-\eta_{\varepsilon}(t)\bigr)
\end{align*}
for $\varepsilon\in(0,\frac{T}{2}]$, where
\begin{align*}
\widetilde{a}_{\varepsilon,kl}(t,x):=\begin{cases}
a_{kl}(\varepsilon,x)&\text{if}\ \ t\in(-\infty,\varepsilon),\\
a_{kl}(t,x)&\text{if}\ \ t\in[\varepsilon,T],\\
a_{kl}(T,x)&\text{if}\ \ t\in(T,+\infty).
\end{cases}
\end{align*}
The convolution in time is given by
\begin{align*}
\widetilde{\widetilde{a}}_{\varepsilon,kl}(t,x):=\big(\rho_{\frac{\varepsilon}{2}}\ast_{(t)}\widetilde{a}_{\varepsilon,kl}\big)(t,x).
\end{align*}
By the support of $\rho$ and the definition of $\eta_\varepsilon$, the resulting coefficient satisfies $a_{\varepsilon,kl}(t,x)=a_{kl}(t,x)$ for $t\in[2\varepsilon,T]$ and $a_{\varepsilon,kl}(t,x)=a_{kl}(\varepsilon,x)$ for $t\in[0,\frac{\varepsilon}{2}]$.
The regularized coefficients $a_{\varepsilon,kl}$ also satisfy the following properties:
\begin{enumerate}[label=(\roman*)]
	\item The condition
	\begin{align*}
	\lambda_0|\xi|^2\leqslant\sum\limits_{k,l=1}^na_{\varepsilon,kl}(t,x)\xi_k\xi_l\leqslant\Lambda_0|\xi|^2
	\end{align*}
	holds for any $t\in[0,T]$ and any $x,\xi\in\mb{R}^n$.
	\item There exists $C_0>0$ (possibly different from the one appearing in (H1), but still independent of $\varepsilon\in(0,\frac{T}{2}]$) such that for all $k,l\in\{1,\dots,n\}$, the condition
		\begin{align*}
		|a_{\varepsilon,kl}(t,x+y)-a_{\varepsilon,kl}(t,x)|\leqslant C_0|y|
	\end{align*}
	holds for any $t\in[0,T]$ and any $x,y\in\mb{R}^n$ with $|y|\in(0,1)$.
	\item We may choose a bounded nonnegative function $\varphi_\varepsilon$ on $[0,T]$ such that
	\begin{align}\label{VARPHI-Setting}
	\varphi_{\varepsilon}(t)=0\ \ &\text{if}\ \ t\in\Big[0,\frac{\varepsilon}{2}\Big],\notag\\
	0\leqslant\varphi_{\varepsilon}(t)\leqslant C h(t;\alpha)\ \ &\text{if}\ \ t\in\Big[\frac{\varepsilon}{2},T\Big],
	\end{align}
	and there exist $C_1,C_2,C_3>0$, independent of $\varepsilon\in(0,\frac{T}{2}]$, such that
	\begin{align*}
	|\partial_t^pa_{\varepsilon,kl}(t,x)|&\leqslant C_p[\varphi_{\varepsilon}(t)]^p\ \ \text{with}\ \ p\in\{1,2\},\\
	|\partial_ta_{\varepsilon,kl}(t,x+y)-\partial_ta_{\varepsilon,kl}(t,x)|&\leqslant C_3|y|\varphi_{\varepsilon}(t),
	\end{align*}
	hold for any $t\in[0,T]$, any $x,y\in\mb{R}^n$ with $|y|\in(0,1)$, and all $k,l\in\{1,\dots,n\}$.
\end{enumerate}
\begin{proof}
{(i) Since freezing in time, convolution with the nonnegative kernel $\rho_{\frac{\varepsilon}{2}}$, and convex combinations preserve \eqref{Property-akl-1}, the coefficients $a_{\varepsilon,kl}$ satisfy the required ellipticity bounds.}\\
(ii) By (H1) and the normalization of $\rho_{\frac{\varepsilon}{2}}$, we obtain
\begin{align*}
|\widetilde{\widetilde{a}}_{\varepsilon,kl}(t,x+y)-\widetilde{\widetilde{a}}_{\varepsilon,kl}(t,x)|&
\leqslant\int_{\mb{R}}\rho_{\frac{\varepsilon}{2}}(t-\tau)|\widetilde{a}_{\varepsilon,kl}(\tau,x+y)-\widetilde{a}_{\varepsilon,kl}(\tau,x)|\,\mathrm{d}\tau\\
&\leqslant C_0|y|\int_{\mb{R}}\rho_{\frac{\varepsilon}{2}}(t-\tau)\,\mathrm{d}\tau=C_0|y|.
\end{align*}
By applying the triangle inequality to the definition of $a_{\varepsilon,kl}$, we obtain the spatial Lipschitz bound.\\
{(iii) The coefficients are constant for $t\leqslant\frac{\varepsilon}{2}$, so $\partial_ta_{\varepsilon,kl}(t,x)=0$ there. Since the nonconstant part of the convolution involves time variables comparable with $\varepsilon$ on $[\frac{\varepsilon}{2},2\varepsilon]\cap[0,T]$, Hypothesis \ref{Hypothesis-A} shows}
\begin{align}\label{Difference-Regularized-Coefficient}
|\widetilde{\widetilde{a}}_{\varepsilon,kl}(t,x)-a_{kl}(t,x)|
&\leqslant C\varepsilon h(c\varepsilon;\alpha)
\leqslant C\Big(\log\frac{1}{\varepsilon}\Big)^{\alpha},
\end{align}
where $c>0$ is independent of $\varepsilon$. Since $|\partial_t^p\eta_{\varepsilon}(t)|\leqslant C\varepsilon^{-p}$ for $p\in\{1,2\}$, it follows from \eqref{Difference-Regularized-Coefficient} that
\begin{align*}
|\widetilde{\widetilde{a}}_{\varepsilon,kl}-a_{kl}|\,|\partial_t\eta_{\varepsilon}|
&\leqslant C h(t;\alpha),\\
|\widetilde{\widetilde{a}}_{\varepsilon,kl}-a_{kl}|\,|\partial_t^2\eta_{\varepsilon}|
&\leqslant C[h(t;\alpha)]^2
\end{align*}
on this interval. The frozen extension is locally Lipschitz in time and satisfies
\begin{align*}
	\|\partial_t\widetilde a_{\varepsilon,kl}\|_{L^\infty(\mb{R}\times\mb{R}^n)}\leqslant C h(\varepsilon;\alpha).
\end{align*} By differentiating the mollifier in the second-order term, we obtain the following bounds:
\begin{align*}
\|\partial_t\widetilde{\widetilde a}_{\varepsilon,kl}\|_{L^\infty(\mb{R}\times\mb{R}^n)}
&\leqslant C h(\varepsilon;\alpha),\\
\|\partial_t^2\widetilde{\widetilde a}_{\varepsilon,kl}\|_{L^\infty(\mb{R}\times\mb{R}^n)}
&\leqslant \|\rho_{\frac{\varepsilon}{2}}'\|_{L^1(\mb{R})}
\|\partial_t\widetilde a_{\varepsilon,kl}\|_{L^\infty(\mb{R}\times\mb{R}^n)}\leqslant C\varepsilon^{-1}h(\varepsilon;\alpha).
\end{align*}
Since $t$ and $\varepsilon$ are comparable on $[\frac{\varepsilon}{2},2\varepsilon]\cap[0,T]$, these bounds are at most $C h(t;\alpha)$ and $C[h(t;\alpha)]^2$, respectively, with constants depending on $T<1$. The formula
\begin{align*}
\partial_t^2a_{\varepsilon,kl}
&=\partial_t^2\widetilde{\widetilde{a}}_{\varepsilon,kl}\eta_{\varepsilon}
+\partial_t^2a_{kl}(1-\eta_{\varepsilon})+2(\partial_t\widetilde{\widetilde{a}}_{\varepsilon,kl}-\partial_ta_{kl})\partial_t\eta_{\varepsilon}
+(\widetilde{\widetilde{a}}_{\varepsilon,kl}-a_{kl})\partial_t^2\eta_{\varepsilon}
\end{align*}
then proves the asserted bounds for $p\in\{1,2\}$. To check the mixed bound, let $\delta_yb(t,x):=b(t,x+y)-b(t,x)$. Integrating the spatial difference of $\partial_ta_{kl}$ in (H2) gives, in the transition interval,
\begin{align*}
|\delta_y(\widetilde{\widetilde a}_{\varepsilon,kl}-a_{kl})|
&\leqslant C|y|\varepsilon h(c\varepsilon;\alpha),\notag\\
|\delta_y\partial_t\widetilde{\widetilde a}_{\varepsilon,kl}|
&\leqslant C|y|h(c\varepsilon;\alpha).
\end{align*}
By substituting these bounds into the formula for $\partial_ta_{\varepsilon,kl}$, we obtain the required spatial Lipschitz estimate. We may therefore take
\begin{align*}
\varphi_\varepsilon(t):=
\begin{cases}
0&\text{if}\ \ 0\leqslant t\leqslant\frac{\varepsilon}{2},\\
h(t;\alpha)&\text{if} \ \ \frac{\varepsilon}{2}<t\leqslant T,
\end{cases}
\end{align*}
to complete our proof.
\end{proof}

\subsection{Tools from the Littlewood--Paley theory} \label{Sec:Littlewood-Paley theory}
We recall the Littlewood--Paley decomposition and the associated characterization of Sobolev norms from \cite{Bony=1981,Metivier=2008,Bahouri-Chemin-Danchin=2011}.

Let $\chi=\chi(\xi)$ be a smooth radial function supported in $B_2(0)$, equal to $1$ in a neighborhood of $B_1(0)$ and non-increasing in the radial variable.
We set $\phi(\xi):=\chi(\xi)-\chi(2\xi)$ and, for $j\in\mb{N}_0$, define
\begin{align*}
\chi_j(\xi):=\chi(2^{-j}\xi)\ \ \text{and}\ \ \phi_j(\xi):=\phi(2^{-j}\xi).
\end{align*}

We define the dyadic blocks $\Delta_j$ for $j\in\mb{N}_0$ by
\begin{align}\label{Definition-Dyadic-Blocks}
\Delta_ju:=\begin{cases}
\chi(D_x)u&\text{if}\ \ j=0,\\
\phi_j(D_x)u&\text{if}\ \ j\geqslant 1.
\end{cases}
\end{align}
The low-frequency cut-off operator $S_j$ is defined via
\begin{align*}
S_ju:=\chi_j(D_x)u=\sum\limits_{k=0}^{j}\Delta_ku.
\end{align*}
For every $u\in\ml{S}'$, the Littlewood--Paley decomposition
\begin{align*}
u=\sum\limits_{j=0}^{+\infty}\Delta_ju
\end{align*}
holds in the topology of $\ml{S}'$.

For every $s\in\mb{R}$, the dyadic characterization of $H^s$ gives a constant $C_s>0$ such that
\begin{align}\label{EQUIV}
\frac{1}{C_s}\sum\limits_{j=0}^{+\infty}2^{2sj}\|\Delta_ju\|_{L^2}^2\leqslant\|u\|_{H^s}^2\leqslant C_s\sum\limits_{j=0}^{+\infty}2^{2sj}\|\Delta_ju\|_{L^2}^2.
\end{align}

\subsection{Tools from the para-differential calculus with parameters}\label{Subsection-para-diff-parameter}
To treat the low spatial regularity and preserve coercivity, we use para-differential calculus with parameters \cite{Metivier=1986,Metivier-Zumbrun=2005,Metivier=2008}, as in \cite{Colombini-DelSanto-Fanelli-Metivier=2013,Colombini-DelSanto-Fanelli-Metivier=2013-2}.

For a symbol $a=a(x,\xi)$ which is smooth in $\xi$ and bounded in $x$, spatial regularization gives a symbol $\sigma_a=\sigma_a(x,\xi)$ smooth in both variables. We define the associated para-differential operator by
\begin{align*}
\ml{T}_a:=\sigma_a(x,D_x).
\end{align*}
When the symbol also depends on $t$, we treat time as a parameter.

For symbols which are Lipschitz in $x$ and satisfy the usual symbol estimates in $\xi$, different admissible regularizations give operators differing by a remainder of one lower order. In the case $a=a(x)$, one possible para-product is
\begin{align*}
\ml{T}_a:\ u\mapsto\ml{T}_au:=S_0a\,S_3u+\sum\limits_{j=1}^{+\infty}S_ja\,\Delta_{j+3}u,
\end{align*}
which is a para-product with the displayed choice of frequency cut-offs, as in \cite{Bony=1981} and \cite[Chapter 2]{Bahouri-Chemin-Danchin=2011}. For symbols depending also on $\xi$, the products in this formula must be interpreted through symbol quantization, as specified below.

Positivity of the coefficient alone does not give coercivity of the associated para-differential operator. For a bounded scalar function $a=a(x)$ satisfying $a(x)\geqslant c>0$ almost everywhere, one has $\|au\|_{L^2}\geqslant c\|u\|_{L^2}$, but the same lower bound does not hold in general for $\ml{T}_a$.

To obtain coercivity, we use para-differential calculus with a sufficiently large parameter $\gamma\geqslant2$, as in \cite{Metivier=1986,Metivier-Zumbrun=2005}. For a coefficient $a=a(x)$ and $u\in\ml{S}$, we set
\begin{align}\label{T-f-gamma}
\ml{T}_{a}^{\gamma}u:=S_{\mu-1}a\,S_{\mu+2}u+\sum\limits_{m=\mu}^{+\infty}S_ma\,\Delta_{m+3}u,
\end{align}
where $\mu=\mu(\gamma):=\lfloor\log_2\gamma\rfloor\geqslant1$. For a general symbol $a=a(t,x,\xi,\gamma)$, we use the left quantization of
\begin{align}\label{Symbol-Quantization}
\sigma_a^\gamma(t,x,\xi)&:=S_{\mu-1}^{(x)}a(t,x,\xi,\gamma)\chi_{\mu+2}(\xi)+\sum\limits_{m=\mu}^{+\infty}
S_m^{(x)}a(t,x,\xi,\gamma)\phi_{m+3}(\xi),\notag\\
\ml{T}_a^\gamma&:=\sigma_a^\gamma(x,D_x).
\end{align}
Here, $S_m^{(x)}$ acts only on $x$, and \eqref{Symbol-Quantization} reduces to \eqref{T-f-gamma} when $a$ is independent of $\xi$. We require the symbol estimates
\begin{align}\label{Lip-Symbol-Class}
\|\partial_\xi^\beta a(t,\cdot,\xi,\gamma)\|_{L^\infty}
+\|\nabla_x\partial_\xi^\beta a(t,\cdot,\xi,\gamma)\|_{L^\infty}
\leqslant C_\beta(\gamma+|\xi|)^{d-|\beta|}
\end{align}
for a symbol of order $d$, with spatial derivatives understood in the weak sense. For a real symbol of order zero satisfying $a\geqslant c>0$, the parameter-dependent positivity estimate gives, for all sufficiently large $\gamma$,
\begin{align*}
\mathrm{Re}\,(\ml{T}_a^\gamma u,u)_{L^2}
\geqslant \frac{c}{2}\|u\|_{L^2}^2\ \ \mbox{and}\ \
\|\ml{T}_a^\gamma u\|_{L^2}\geqslant\frac{c}{2}\|u\|_{L^2}.
\end{align*}
Since the lower threshold for $\gamma$ depends only on $c$ and finitely many symbol bounds in \eqref{Lip-Symbol-Class}, we may choose it uniformly for a family with uniform bounds.

For fixed $\gamma$, the Sobolev spaces $H^s_\gamma$ defined through the parameter-dependent Littlewood--Paley decomposition coincide with $H^s$, with equivalence constants depending on $\gamma$, as shown in \cite[Section 3.2]{Colombini-DelSanto-Fanelli-Metivier=2013}.

\subsection{Symbolic calculus}\label{Sub-Sec=symbolic calculus}
We introduce the order-zero symbol associated with the regularized principal part of $\ml{L}$:
\begin{align*}
\varpi_{\varepsilon}:=(\gamma^2+|\xi|^2)^{-\frac{1}{2}}\Big(\gamma^2+\sum\limits_{k,l=1}^na_{\varepsilon,kl}(t,x)\xi_k\xi_l\Big)^{\frac{1}{2}}.
\end{align*}
To account for the logarithmic factor in (H2), we include the factor $(j+1)^\alpha$ in the frequency-dependent regularization used in \cite{Colombini-De Giorgi-Spagnolo=1979,Colombini-Lerner=1995}.
For each dyadic block, we choose the time regularization parameter $J=J_j$ by
\begin{align}\label{Definition-J}
J_j:=\min\Big\{(j+1)^{\alpha}2^{-j-4},\frac{T}{4}\Big\}\ \ \text{for}\ \ j\in\mb{N}_0.
\end{align}
With this choice, $J_j\in(0,\frac{T}{2}]$ and we may set $\varepsilon=J_j$ in the time regularization. For all sufficiently large $j$, we have $J_j=(j+1)^\alpha2^{-j-4}$, while the finitely many remaining blocks only affect the constants.

By applying the symbol regularization as in \cite{Colombini-DelSanto-Fanelli-Metivier=2013,Colombini-DelSanto-Fanelli-Metivier=2013-2}, we obtain the following bounds for $\varpi_J$ and its time derivatives. Suppressing the superscript $\gamma$, we have, for $\beta_0,\beta_1\in\mb{N}_0^n$ with $|\beta_1|\geqslant1$,
\begin{align*}
|\partial_{\xi}^{\beta_0}\sigma_{\varpi_J}(t,x,\xi,\gamma)|&\lesssim (\gamma+|\xi|)^{-|\beta_0|},\\
|\partial_x^{\beta_1}\partial_{\xi}^{\beta_0}\sigma_{\varpi_J}(t,x,\xi,\gamma)|&\lesssim (\gamma+|\xi|)^{-|\beta_0|+|\beta_1|-1},\\
|\partial_{\xi}^{\beta_0}\sigma_{\partial_t\varpi_J}(t,x,\xi,\gamma)|&\lesssim \varphi_J(t)\,(\gamma+|\xi|)^{-|\beta_0|},\\
|\partial_x^{\beta_1}\partial_{\xi}^{\beta_0}\sigma_{\partial_t\varpi_J}(t,x,\xi,\gamma)|&\lesssim \varphi_J(t)\,(\gamma+|\xi|)^{-|\beta_0|+|\beta_1|-1},\\
|\partial_{\xi}^{\beta_0}\sigma_{\partial_t^2\varpi_J}(t,x,\xi,\gamma)|&\lesssim [\varphi_J(t)]^2(\gamma+|\xi|)^{-|\beta_0|},\\
|\partial_x^{\beta_1}\partial_{\xi}^{\beta_0}\sigma_{\partial_t^2\varpi_J}(t,x,\xi,\gamma)|&\lesssim [\varphi_J(t)]^2(\gamma+|\xi|)^{-|\beta_0|+|\beta_1|},
\end{align*}
for $t\in[0,T]$, $\gamma\geqslant2$ and $x,\xi\in\mb{R}^n$. The constants may depend on $\beta_0,\beta_1$ and the structural bounds, but not on $j,t,x,\xi,\gamma$. The last line follows from the $L^\infty_x$ bound on $\partial_t^2a_{J,kl}$.

Since the symbols $\varpi_J^{\pm\frac12}$ have uniformly positive lower bounds and uniform spatial Lipschitz seminorms, we may fix $\gamma$ independently of $j$ and $t$ so that
\begin{align*}
\Bigl\|\ml{T}^{\gamma}_{\varpi_J^{-\frac{1}{2}}}g(t,\cdot)\Bigr\|_{L^2}\geqslant C\|g(t,\cdot)\|_{L^2}\ \ \text{and}\ \ \Bigl\|\ml{T}^{\gamma}_{\varpi_J^{\frac{1}{2}}(\gamma^2+|\xi|^2)^{\frac{1}{2}}}g(t,\cdot)\Bigr\|_{L^2}\geqslant C\|g(t,\cdot)\|_{H^1}.
\end{align*}
Since right composition with a Fourier multiplier commutes with the symbol regularization in \eqref{Symbol-Quantization}, we obtain the second inequality by applying the order-zero coercivity estimate for $\varpi_J^{\frac12}$ to $(\gamma^2+|D_x|^2)^{\frac12}g$. All subsequent constants may depend on this fixed $\gamma$.

We use the following Lipschitz symbolic calculus from \cite[Chapters 5--6]{Metivier=2008}, with corresponding log-Lipschitz statements in \cite[Proposition 3.19]{Colombini-DelSanto-Fanelli-Metivier=2013}.
\begin{prop}[Lipschitz symbolic calculus]\label{Prop-01}
If $a$ satisfies the $L^\infty_x$ part of \eqref{Lip-Symbol-Class}, then $\ml{T}_a^\gamma$ has order $d$ as follows:
\begin{align*}
\|\ml{T}_a^\gamma v\|_{H^s_\gamma}\leqslant C\|v\|_{H^{s+d}_\gamma}.
\end{align*}
 If $a$ and $b$ satisfy the full condition \eqref{Lip-Symbol-Class}, with orders $d$ and $e$, respectively, then
\begin{align*}
\ml{T}^{\gamma}_a\ml{T}^{\gamma}_b
&=\ml{T}^{\gamma}_{ab}+\ml{R}^{\gamma}_{a,b},\notag\\
(\ml{T}^{\gamma}_a)^*
&=\ml{T}^{\gamma}_{\overline a}+\widetilde{\ml{R}}^{\gamma}_a,
\end{align*}
where $\ml{R}^{\gamma}_{a,b}$ has order $d+e-1$ and $\widetilde{\ml{R}}^{\gamma}_a$ has order $d-1$. In particular, for every $s\in\mb{R}$,
\begin{align*}
\|\ml{R}^{\gamma}_{a,b}v\|_{H^s_\gamma}
&\leqslant C\|v\|_{H^{s+d+e-1}_\gamma},\\
\|\widetilde{\ml{R}}^{\gamma}_av\|_{H^s_\gamma}
&\leqslant C\|v\|_{H^{s+d-1}_\gamma}.
\end{align*}
Here, $\|v\|_{H^s_\gamma}:=\|(\gamma^2+|D_x|^2)^{\frac{s}{2}}v\|_{L^2}$, and $C$ depends on finitely many symbol seminorms. If $a$ and $b$ are smooth functions of $\varpi_J$ of order zero, the two remainders have order $-1$. The estimates also apply to their products with powers of $(\gamma^2+|\xi|^2)^{\frac12}$. If one factor is $\partial_t f(\varpi_J)$, the corresponding bound contains one factor $\varphi_J(t)$.
\end{prop}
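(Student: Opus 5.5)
The plan is to reduce every assertion of Proposition~\ref{Prop-01} to estimates on the regularized symbols $\sigma^\gamma_a$ from \eqref{Symbol-Quantization}, and then apply the classical $L^2$ theory for smooth symbols, following \cite[Chapters 5--6]{Metivier=2008}. The first step is to transfer \eqref{Lip-Symbol-Class} into smooth-symbol bounds. Since $S_m^{(x)}$ is convolution with an $L^1$-normalized kernel at scale $2^{-m}$, and the cut-off $\phi_{m+3}(\xi)$ localizes to $|\xi|\sim 2^m\gtrsim\gamma$, the estimates I aim for are
\begin{align*}
|\partial_x^{\beta_1}\partial_\xi^{\beta_0}\sigma_a^\gamma(x,\xi)|\lesssim(\gamma+|\xi|)^{d-|\beta_0|+\max\{|\beta_1|-1,0\}}.
\end{align*}
The $L^\infty_x$ part gives the case $\beta_1=0$. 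When $|\beta_1|\geqslant1$, I put one derivative on $a$ (which is Lipschitz) and the remaining $|\beta_1|-1$ on the mollifier. For the boundedness claim only the $L^\infty_x$ part is needed, giving the bound $(\gamma+|\xi|)^{d-|\beta_0|}2^{m|\beta_1|}$ on each dyadic piece. This is exactly the kind of bound the displayed estimates for $\sigma_{\varpi_J}$ record.

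For the order-$d$ bound, I decompose $\ml{T}_a^\gamma v=\sum_m\sigma_m(x,D_x)\Delta_{m+3}v$. Each piece is spectrally localized: $S_m^{(x)}a$ has $x$-frequencies at most $2^{m+1}$, and $\phi_{m+3}$ lives at $|\xi|\sim2^{m+3}$, so the output has frequencies $\sim2^{m}$. Then I use almost orthogonality. Rescaling $\xi\mapsto2^m\xi$ turns each piece into an operator whose symbol has uniformly bounded derivatives, so a Calder\'on--Vaillancourt (or direct kernel) argument bounds it on $L^2$ by $C2^{md}$. The low-frequency block with $S_{\mu-1}^{(x)}a\,\chi_{\mu+2}$ is handled the same way at scale $\gamma$. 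Summing the pieces with the $H^s_\gamma$ dyadic characterization (the analogue of \eqref{EQUIV}) gives $\|\ml{T}^\gamma_av\|_{H^s_\gamma}\leqslant C\|v\|_{H^{s+d}_\gamma}$. The constant $C$ depends only on finitely many $\xi$-seminorms.

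For the composition, I split
\begin{align*}
\ml{T}^\gamma_a\ml{T}^\gamma_b-\ml{T}^\gamma_{ab}
=\bigl(\sigma_a^\gamma(x,D)\sigma_b^\gamma(x,D)-(\sigma_a^\gamma\sigma_b^\gamma)(x,D)\bigr)
+(\sigma_a^\gamma\sigma_b^\gamma-\sigma_{ab}^\gamma)(x,D).
\end{align*}
\begin{itemize}
\item The second term is handled by a commutator-type estimate. At frequency $2^m$, $\|S_m(ab)-S_maS_mb\|_{L^\infty}\lesssim2^{-m}\|a\|_{\mathrm{Lip}}\|b\|_{\mathrm{Lip}}$. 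This gives a symbol of order $d+e-1$, smooth in $x$, which the boundedness step then controls.
\item The first term is written via the exact composition formula with a first-order Taylor remainder in $\xi$. This produces an integral whose symbol involves $\partial_\xi\sigma_a^\gamma\cdot D_x\sigma_b^\gamma$. Here $\partial_\xi$ gains one order and $D_x$ costs nothing, by the smooth-symbol bounds above with $|\beta_1|=1$, so the result has order $d+e-1$.
\end{itemize}
The adjoint is treated the same way. The symbol of $(\sigma^\gamma_a)^*$ is $\overline{\sigma^\gamma_a}$ plus a remainder involving $\partial_\xi D_x\overline{\sigma_a^\gamma}$, which has order $d-1$. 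The regularization of $\overline a$ agrees with that of $\overline{\sigma_a}$ up to the same $2^{-m}$ Lipschitz error as in the first bullet.

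\textbf{Main obstacle.} The delicate point is the Taylor remainder in the composition when only Lipschitz regularity is available. Higher $x$-derivatives of $\sigma_b^\gamma$ cost $(\gamma+|\xi|)^{|\beta_1|-1}$, and this must be compensated exactly by the decay of the matching $\xi$-derivatives of $\sigma_a^\gamma$. To close the oscillatory-integral estimate I would localize dyadically and rescale to unit frequency, so that all seminorms become uniform in $m$. I would then check that the leftover powers balance to $2^{m(d+e-1)}$, which again reduces matters to the boundedness step.

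The final remarks follow by tracking the constants. If $a,b$ are smooth functions of $\varpi_J$ of order zero, the chain rule together with the listed bounds on $\sigma_{\varpi_J}$ gives seminorms that are uniform in $j$ and $t$. Multiplying by powers of $(\gamma^2+|\xi|^2)^{1/2}$ commutes with the regularization. For $\partial_tf(\varpi_J)=f'(\varpi_J)\partial_t\varpi_J$, the seminorms carry exactly one factor $\varphi_J(t)$, and all the bounds above depend linearly on the seminorms of each factor.
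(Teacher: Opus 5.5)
Your sketch is correct and takes the same route the paper relies on: the paper gives no independent proof of this proposition and instead invokes the Lipschitz para-differential calculus of M\'etivier \cite[Chapters 5--6]{Metivier=2008}, whose proof is the dyadic, spectrally localized argument you outline. That argument consists of annulus-to-annulus boundedness, the split into $\sigma_a^\gamma\sigma_b^\gamma-\sigma^\gamma_{ab}$ plus a first-order Taylor remainder, the analogous adjoint expansion, and bilinear tracking of the seminorms to produce the factor $\varphi_J(t)$. The one point to state explicitly is that the remainder symbols must keep the spectral localization, so that output frequencies stay comparable to input frequencies, since $S^{m}_{1,1}$-type bounds alone would not give the $H^s_\gamma$ estimates for $s\leqslant0$.
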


\section{Energy estimates}\label{Section-Energy-Estimates}
To prove Theorem \ref{Thm-1}, we use the localized energy of \cite{Colombini-DelSanto-Fanelli=2023} with the scale \eqref{Definition-J} and the weights introduced in Subsection \ref{Sec:weight-comparison}.
For a given $u\in\ml{C}^2([0,T],H^{\infty})$, we first para-linearize $\ml{L}$ as follows:
\begin{align}\label{Eq-01}
\partial_t^2u=\sum\limits_{k,l=1}^n\partial_k\bigl(a_{kl}(t,x)\,\partial_{l}u\bigr)+\ml{L}u=\sum\limits_{k,l=1}^n\partial_k\bigl(\ml{T}^{\gamma}_{a_{kl}}\partial_lu\bigr)+\widetilde{\ml{L}}u,
\end{align}
where $\ml{T}^{\gamma}_{a_{kl}}$ is the para-product defined in \eqref{T-f-gamma} and
\begin{align}\label{Defn-L}
\widetilde{\ml{L}}u:=\ml{L}u+\sum\limits_{k,l=1}^n\partial_k\bigl((a_{kl}-\ml{T}^{\gamma}_{a_{kl}})\,\partial_lu\bigr).
\end{align}
By applying $\Delta_j$ to \eqref{Eq-01}, we find that each block $u_j:=\Delta_ju$, for $j\in\mb{N}_0$, satisfies
\begin{align}\label{Eq-02}
\partial_t^2u_{j}=\sum\limits_{k,l=1}^n\partial_k\bigl(\ml{T}^{\gamma}_{a_{kl}}\partial_lu_{j}\bigr)+\sum\limits_{k,l=1}^n\partial_k\bigl(\bigl[\Delta_{j},\ml{T}^{\gamma}_{a_{kl}}\bigr]\partial_lu\bigr)+(\widetilde{\ml{L}}u)_{j},
\end{align}
in which $(\widetilde{\ml{L}}u)_j:=\Delta_j(\widetilde{\ml{L}}u)$ and $[A,B]:=AB-BA$.

\subsection{The localized Tarama energy} \label{Sec:Tarama energy}
For the localized energy, we define
\begin{align*}
v_j:=\ml{T}^{\gamma}_{\varpi_J^{-\frac{1}{2}}}\partial_t u_j-\ml{T}^{\gamma}_{\partial_t(\varpi_J^{-\frac{1}{2}})}u_j\ \ \text{and}\ \ w_j:=\ml{T}^{\gamma}_{\varpi_J^{\frac{1}{2}}(\gamma^2+|\xi|^2)^{\frac{1}{2}}}u_j.
\end{align*}
Along with the classical energy
\begin{align*}
\mb{E}_j^{\mathrm{clas}}(t,u):=\|\partial_tu_j(t,\cdot)\|_{L^2}^2+\|\nabla u_j(t,\cdot)\|_{L^2}^2+\|u_j(t,\cdot)\|_{L^2}^2,
\end{align*}
we use the following modified energy introduced in \cite{Tarama=2007}:
\begin{align*}
\mb{E}_j^{\mathrm{Tara}}(t,u):=\|v_j(t,\cdot)\|_{L^2}^2+\|w_j(t,\cdot)\|_{L^2}^2+\|u_j(t,\cdot)\|_{L^2}^2.
\end{align*}
The lower-order correction in $v_j$ cancels the leading terms containing a single time derivative of the principal symbol when we differentiate the energy.

By \eqref{VARPHI-Setting}, we have $\varphi_J(t)=0$ for $t\leqslant\frac{J}{2}$. For $t\geqslant\frac{J}{2}$ and sufficiently large $j$, the choice \eqref{Definition-J} gives
\begin{align*}
\varphi_J(t)\lesssim h\Big(\frac{J}{2};\alpha\Big)
&\lesssim (j+1)^{-\alpha}2^j
\big[\log\big((j+1)^{-\alpha}2^{j+5}\big)\big]^{\alpha}\\
&\lesssim2^j.
\end{align*}
By including the finitely many low-frequency blocks in the constant, we obtain
\begin{align}\label{Boundedness-Crucial}
0\leqslant2^{-j}\varphi_J(t)\leqslant C
\end{align}
uniformly for $t\in[0,T]$ and $j\in\mb{N}_0$.
By using coercivity and the symbolic calculus from Subsection \ref{Sub-Sec=symbolic calculus}, we obtain
\begin{align}\label{dt-uj-control}
\|\partial_tu_j(t,\cdot)\|_{L^2}\lesssim \Bigl\|\ml{T}^{\gamma}_{\varpi_J^{-\frac{1}{2}}}\partial_tu_j(t,\cdot)\Bigr\|_{L^2}&\lesssim\|v_j(t,\cdot)\|_{L^2}+\Bigl\|\ml{T}^{\gamma}_{\partial_t(\varpi_J^{-\frac{1}{2}})}u_j(t,\cdot)\Bigr\|_{L^2}\notag\\
&\lesssim\|v_j(t,\cdot)\|_{L^2}+\varphi_{J}(t)\|u_j(t,\cdot)\|_{L^2}\notag\\
&\lesssim \bigl[\mb{E}_j^{\mathrm{Tara}}(t,u)\bigr]^{\frac{1}{2}},
\end{align}
where we used \eqref{Boundedness-Crucial} together with
\begin{align}\label{Eq-03}
2^j\|u_j(t,\cdot)\|_{L^2}\sim\|u_j(t,\cdot)\|_{H^1}\lesssim\Bigl\|\ml{T}^{\gamma}_{\varpi_J^{\frac{1}{2}}(\gamma^2+|\xi|^2)^{\frac{1}{2}}}u_j(t,\cdot)\Bigr\|_{L^2}=\|w_j(t,\cdot)\|_{L^2}.
\end{align}
For $j=0$, the equivalence in \eqref{Eq-03} follows from the ball support of {$\widehat{\Delta_0u}$} and the inhomogeneous $H^1$ norm.
The last two estimates imply
\begin{align*}
	\mb{E}_j^{\mathrm{clas}}(t,u)\lesssim\mb{E}_j^{\mathrm{Tara}}(t,u).
\end{align*} Conversely, the uniform boundedness of the symbols, \eqref{Boundedness-Crucial}, and the spectral localization of $u_j$ give immediately
\begin{align*}
\|v_j(t,\cdot)\|_{L^2}&\lesssim\|\partial_tu_j(t,\cdot)\|_{L^2}+\varphi_J(t)\|u_j(t,\cdot)\|_{L^2}
\lesssim\bigl[\mb{E}_j^{\mathrm{clas}}(t,u)\bigr]^{\frac{1}{2}},\\
\|w_j(t,\cdot)\|_{L^2}&\lesssim\|u_j(t,\cdot)\|_{H^1}
\lesssim\bigl[\mb{E}_j^{\mathrm{clas}}(t,u)\bigr]^{\frac{1}{2}}.
\end{align*}
Consequently,
\begin{align}\label{Equivalence-Tarama-Classical}
\mb{E}_j^{\mathrm{Tara}}(t,u)\sim\mb{E}_j^{\mathrm{clas}}(t,u)
\end{align}
uniformly for $t\in[0,T]$ and $j\in\mb{N}_0$.

\subsection{\texorpdfstring{First-order time derivative of the localized Tarama energy}{First-order time derivative of the localized Tarama energy}}
By differentiating the squared $L^2$ norm of $u_j$, we obtain
\begin{align}\label{Energy-01}
\frac{\mathrm{d}}{\mathrm{d}t}\|u_j(t,\cdot)\|_{L^2}^2&=2\mathrm{Re}\,\bigl(u_j(t,\cdot),\partial_tu_j(t,\cdot)\bigr)_{L^2}\notag\\
&\leqslant\|u_j(t,\cdot)\|_{L^2}^2+\|\partial_tu_j(t,\cdot)\|_{L^2}^2\notag\\
&\lesssim \mb{E}_j^{\mathrm{Tara}}(t,u),
\end{align}
where we used \eqref{dt-uj-control}. It remains to estimate the terms involving $v_j$ and $w_j$.

\subsubsection{\texorpdfstring{Estimates for the term involving $v_j$}{Estimates for the term involving vj}}
By differentiating $v_j$ in time and substituting \eqref{Eq-02}, we obtain

\begin{align}\label{Energy-02}
&\frac{\mathrm{d}}{\mathrm{d}t}\|v_j(t,\cdot)\|_{L^2}^2
=2\mathrm{Re}\,\big(v_j(t,\cdot),\partial_tv_j(t,\cdot)\big)_{L^2}\notag\\
&\qquad=-2\mathrm{Re}\,\Big(v_j(t,\cdot),\ml{T}^{\gamma}_{\partial_t^2(\varpi_J^{-\frac{1}{2}})}u_j(t,\cdot)\Big)_{L^2}+2\mathrm{Re}\,\Big(v_j(t,\cdot),\ml{T}^{\gamma}_{\varpi_J^{-\frac{1}{2}}}\sum\limits_{k,l=1}^n\partial_k\big(\ml{T}^{\gamma}_{a_{kl}}\partial_lu_j(t,\cdot)\big)\Big)_{L^2}\notag\\
&\qquad\quad+2\mathrm{Re}\,\Big(v_j(t,\cdot),\sum\limits_{k,l=1}^n\ml{T}^{\gamma}_{\varpi_J^{-\frac{1}{2}}}\partial_k\big(\big[\Delta_j,\ml{T}^{\gamma}_{a_{kl}}\big]\partial_lu(t,\cdot)\big)\Big)_{L^2}\notag\\
&\qquad\quad+2\mathrm{Re}\,\Big(v_j(t,\cdot),\ml{T}^{\gamma}_{\varpi_J^{-\frac{1}{2}}}\big(\widetilde{\ml{L}}u(t,\cdot)\big)_j\Big)_{L^2},
\end{align}
where we employed the identity
\begin{align*}
\partial_tv_j=-\ml{T}^{\gamma}_{\partial_t^2(\varpi_J^{-\frac{1}{2}})}u_j+\ml{T}^{\gamma}_{\varpi_J^{-\frac{1}{2}}}\partial_t^2u_j.
\end{align*}

By the $L^\infty_x$ symbol bounds for $\partial_t^2(\varpi_J^{-\frac12})$, Proposition \ref{Prop-01} and \eqref{Eq-03}, we obtain
\begin{align*}
\Big|\mathrm{Re}\,\Big(v_j(t,\cdot),\ml{T}^{\gamma}_{\partial_t^2(\varpi_J^{-\frac{1}{2}})}u_j(t,\cdot)\Big)_{L^2}\Big|&\lesssim [\varphi_J(t)]^2\|v_j(t,\cdot)\|_{L^2}\|u_j(t,\cdot)\|_{L^2}\\
&\lesssim 2^{-j}[\varphi_J(t)]^2\|v_j(t,\cdot)\|_{L^2}\|w_j(t,\cdot)\|_{L^2}\\
&\lesssim 2^{-j}[\varphi_J(t)]^2\,\mb{E}_j^{\mathrm{Tara}}(t,u).
\end{align*}
By the uniform $L^2$ boundedness of $\ml{T}^{\gamma}_{\varpi_J^{-\frac{1}{2}}}$, we also obtain
\begin{align*}
\Big|\mathrm{Re}\,\Big(v_j(t,\cdot),\ml{T}^{\gamma}_{\varpi_J^{-\frac{1}{2}}}\big(\widetilde{\ml{L}}u(t,\cdot)\big)_{j}\Big)_{L^2}\Big|\lesssim \big[\mb{E}_j^{\mathrm{Tara}}(t,u)\big]^{\frac{1}{2}}\big\|\big(\widetilde{\ml{L}}u(t,\cdot)\big)_{j}\big\|_{L^2}.
\end{align*}

By the definition of $\ml{T}_a^\gamma$, we write
\begin{align}\label{Eq-04}
\sum\limits_{k,l=1}^n\partial_k\big(\ml{T}^{\gamma}_{a_{kl}}\partial_l u_j\big)=\sum\limits_{k,l=1}^n\ml{T}^{\gamma}_{\partial_ka_{kl}}\partial_l u_j-\ml{T}^{\gamma}_{\sum_{k,l=1}^{n}a_{kl}\xi_k\xi_l}u_j,
\end{align}
{where}
\begin{align*}
\ml{T}^{\gamma}_{\partial_ka_{kl}}\partial_l u_j=S_{\mu-1}\partial_ka_{kl}\,S_{\mu+2}\partial_l u_j+\sum\limits_{\nu=\mu+3}^{+\infty}S_{\nu-3}\partial_ka_{kl}\,\Delta_\nu\partial_l u_j
\end{align*}
with a fixed integer $\mu=\lfloor\log_2\gamma\rfloor$. By spectral localization and the uniform spatial Lipschitz bounds for $a_{kl}$, we obtain, as in \cite[Section 4.2.4]{Colombini-DelSanto-Fanelli-Metivier=2013},
\begin{align}\label{Eq-05}
\big\|\ml{T}^{\gamma}_{\partial_ka_{kl}}\partial_l u_j(t,\cdot)\big\|_{L^2}&\leqslant \|S_{\mu-1}\partial_ka_{kl}(t,\cdot)\|_{L^{\infty}}\|S_{\mu+2}\partial_lu_j(t,\cdot)\|_{L^2}\notag\\
&\quad+\sum\limits_{\nu=\mu+3}^{+\infty}\|\nabla S_{\nu-3}a_{kl}(t,\cdot)\|_{L^{\infty}}\|\Delta_{\nu}\partial_l u_j(t,\cdot)\|_{L^2}\notag\\
&\lesssim \big[\mb{E}_j^{\mathrm{Tara}}(t,u)\big]^{\frac{1}{2}},
\end{align}
where the implicit constant depends on the uniform spatial Lipschitz norm of $a_{kl}$ and the fixed parameter $\gamma$. The remaining principal term will be combined with the derivative of $\|w_j(t,\cdot)\|_{L^2}^2$.
\subsubsection{\texorpdfstring{A dyadic commutator estimate}{A dyadic commutator estimate}} \label{Sec:commutator term}
We denote the commutator term in \eqref{Eq-02} by
\begin{align*}
\ml{B}_j(t,u):=\sum\limits_{k,l=1}^n\partial_k\big(\big[\Delta_j,\ml{T}^{\gamma}_{a_{kl}}\big]\partial_lu(t,\cdot)\big).
\end{align*}
Its contribution to \eqref{Energy-02} is
\begin{align}\label{Definition-Q-Commutator}
Q^{\gamma}_{\mathrm{Com},j}(t,u):=
2\mathrm{Re}\,\Big(v_j(t,\cdot),
\ml{T}^{\gamma}_{\varpi_J^{-\frac{1}{2}}}\ml{B}_j(t,u)\Big)_{L^2}.
\end{align}

\begin{lemma}[Dyadic commutator estimate]\label{Lemma-Raw-Commutator}
There exist an integer $N_0\geqslant1$ and nonnegative constants $c_{jm}$, uniformly bounded with respect to $j,m$, such that
\begin{align}\label{Raw-Commutator}
\|\ml{B}_j(t,u)\|_{L^2}
\leqslant C\sum\limits_{m=0}^{+\infty}c_{jm}2^m\|u_m(t,\cdot)\|_{L^2},
\end{align}
where $c_{jm}=0$ unless $|j-m|\leqslant N_0$. The integer $N_0$ is chosen large enough to include the fixed low-frequency range determined by $\gamma$. Both $N_0$ and $C$ may depend on $\gamma$ and the fixed cut-offs, and $C$ also depends on the uniform spatial Lipschitz bound, but neither depends on $t,j,m$ or $u$.
\end{lemma}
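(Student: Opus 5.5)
We will prove \eqref{Raw-Commutator} by writing the commutator explicitly with the para-product formula \eqref{T-f-gamma} and using spectral localization to reduce it to finitely many interactions near the diagonal.

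For fixed $k,l$ put $f:=\partial_lu$ and $a:=a_{kl}(t,\cdot)$. Decompose $f=\sum_m\Delta_mf$. By \eqref{T-f-gamma},
\begin{align*}
\bigl[\Delta_j,\ml{T}^{\gamma}_{a}\bigr]f
=\bigl[\Delta_j,S_{\mu-1}a\bigr]S_{\mu+2}f+\sum\limits_{m\geqslant\mu}\bigl[\Delta_j,S_ma\bigr]\Delta_{m+3}f .
\end{align*}
The function $S_ma\,\Delta_{m+3}f$ has Fourier support in an annulus of size $\sim2^{m+3}$, since $S_ma$ has frequencies $\lesssim2^{m+1}$. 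The same holds for $\Delta_j$ applied to it and for $S_ma\,\Delta_j\Delta_{m+3}f$. Hence each commutator term vanishes unless $|j-(m+3)|\leqslant N_1$ for an absolute $N_1$.

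The low-frequency piece $S_{\mu-1}a\,S_{\mu+2}f$ has frequencies $\lesssim2^{\mu+3}$. It therefore contributes only when $j\leqslant\mu+N_1$, and it involves only $\Delta_mf$ with $m\leqslant\mu+2$. Taking $N_0\geqslant\mu+N_1+3$, this fixed range is included in the window $|j-m|\leqslant N_0$; this is where the dependence of $N_0$ on $\gamma$ enters. Moreover, $\Delta_{m+3}f=\Delta_{m+3}\sum_{|m'-m-3|\leqslant1}\Delta_{m'}f$.

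The core step is the classical commutator bound
\begin{align*}
\bigl\|[\Delta_j,b]g\bigr\|_{L^2}\leqslant C2^{-j}\|\nabla b\|_{L^\infty}\|g\|_{L^2}.
\end{align*}
For $j\geqslant1$ we write $\Delta_j$ as convolution with $2^{jn}\check\phi(2^j\cdot)$, so that
\begin{align*}
[\Delta_j,b]g(x)=\int2^{jn}\check\phi(2^jy)\bigl(b(x-y)-b(x)\bigr)g(x-y)\dd y .
\end{align*}
Since $|b(x-y)-b(x)|\leqslant\|\nabla b\|_{L^\infty}|y|$, Young's inequality gives the factor $\||y|\check\phi\|_{L^1}2^{-j}$. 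The case $j=0$ is the same with $\check\chi$. We apply this with $b=S_ma$ (or $S_{\mu-1}a$); then $\|\nabla S_ma\|_{L^\infty}\lesssim\|\nabla a\|_{L^\infty}\leqslant C_0'$ uniformly in $m$ and $t$. This is where the uniform Lipschitz bound (H1), in the regularized form (ii) of Subsection~\ref{Section-Regular-in-time}, is used; the global $W^{1,\infty}$ bound follows from the bound for $|y|<1$.

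Finally we combine the pieces. The outer $\partial_k$ acts on a function with frequencies $\lesssim2^{j+N_1}$, so Bernstein's inequality costs $2^j$, which cancels the $2^{-j}$ from the commutator. Also $\|\Delta_{m'}\partial_lu\|_{L^2}\lesssim2^{m'}\|u_{m'}\|_{L^2}$ for $m'\geqslant1$, and $\lesssim\|u_0\|_{L^2}$ for $m'=0$. This yields
\begin{align*}
\|\ml{B}_j\|_{L^2}\leqslant C\sum\limits_{|m-j|\leqslant N_0}2^m\|u_m\|_{L^2},
\end{align*}
with $c_{jm}=\chi_{\{|j-m|\leqslant N_0\}}$, summed over finitely many $k,l$.

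The only delicate point is the bookkeeping of supports in the low-frequency part and near $j=0$. There the annulus argument fails, and we simply absorb everything into a ball of radius $2^{\mu+N_1}$ by enlarging $N_0$.
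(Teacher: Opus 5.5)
Your proposal is correct and follows essentially the same route as the paper. You split $[\Delta_j,\ml{T}^{\gamma}_{a_{kl}}]$ via \eqref{T-f-gamma}, use spectral localization to restrict to a window $|j-m|\leqslant N_0$, apply the kernel bound $\|[\Delta_j,b]z\|_{L^2}\leqslant C2^{-j}\|\nabla b\|_{L^\infty}\|z\|_{L^2}$, and use Bernstein's inequality to trade the outer $\partial_k$ against the gain $2^{-j}$. Your bookkeeping for the low-frequency piece, via $N_0\geqslant\mu+N_1+3$, is more explicit than the paper's. One small correction: $\ml{B}_j$ contains the original coefficients $a_{kl}$, so the bound you need is (H1) itself, not the regularized bound (ii).
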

\begin{proof}
Recalling \eqref{T-f-gamma}, we write
\begin{align*}
\big[\Delta_j,\ml{T}^{\gamma}_{a_{kl}}\big]w
&=[\Delta_j,S_{\mu-1}a_{kl}]S_{\mu+2}w
+\sum\limits_{m=\mu}^{+\infty}[\Delta_j,S_ma_{kl}]\Delta_{m+3}w,
\end{align*}
where $\mu=\lfloor\log_2\gamma\rfloor$ is fixed. The first term contains only a fixed number of low-frequency blocks. For the second term, spectral localization gives
\begin{align*}
[\Delta_j,S_ma_{kl}]\partial_lu_{m+3}\equiv0
\ \ \text{if}\ \ |j-(m+3)|>N_0.
\end{align*}
For a Lipschitz function $b$, the convolution kernel $K_j$ of $\Delta_j$ gives
\begin{align*}
[\Delta_j,b]z(x)
&=\int_{\mb{R}^n}K_j(x-y)\big(b(y)-b(x)\big)z(y)\,\mathrm{d}y,\\
\|[\Delta_j,b]z\|_{L^2}
&\leqslant C2^{-j}\|\nabla b\|_{L^\infty}\|z\|_{L^2}.
\end{align*}
By applying Bernstein's inequality on the spectral support of the commutator, we obtain, for the nonzero interactions,
\begin{align*}
\big\|\partial_k\big([\Delta_j,S_ma_{kl}]\partial_lu_{m+3}(t,\cdot)\big)\big\|_{L^2}
\lesssim2^m\|a_{kl}(t,\cdot)\|_{\mathrm{Lip}}\|u_{m+3}(t,\cdot)\|_{L^2}.
\end{align*}
Summing over the finitely many admissible indices proves \eqref{Raw-Commutator}.
\end{proof}

\subsubsection{\texorpdfstring{Estimates for the term involving $w_j$}{Estimates for the term involving wj}}
By differentiating the definition of $w_j$, we obtain
\begin{align*}
\frac{\mathrm{d}}{\mathrm{d}t}\|w_j(t,\cdot)\|_{L^2}^2=2\mathrm{Re}\,\Big(w_j(t,\cdot),\ml{T}^{\gamma}_{\partial_t(\varpi_J^{\frac{1}{2}})(\gamma^2+|\xi|^2)^{\frac{1}{2}}}u_j(t,\cdot)+\ml{T}^{\gamma}_{\varpi_J^{\frac{1}{2}}(\gamma^2+|\xi|^2)^{\frac{1}{2}}}\partial_tu_j(t,\cdot)\Big)_{L^2}.
\end{align*}

Thanks to
\begin{align*}
\partial_t(\varpi_J^{\frac12})=\frac12\varpi_J^{-\frac12}\partial_t\varpi_J=\varpi_J\partial_t(-\varpi_J^{-\frac12}),
\end{align*}
Proposition \ref{Prop-01} gives
\begin{align*}
&\mathrm{Re}\,\Big(w_j(t,\cdot),\ml{T}^{\gamma}_{\partial_t(\varpi_J^{\frac{1}{2}})(\gamma^2+|\xi|^2)^{\frac{1}{2}}}u_j(t,\cdot)\Big)_{L^2}\\
&\qquad=\mathrm{Re}\,\Big(w_j(t,\cdot),\ml{T}^{\gamma}_{\varpi_J(\gamma^2+|\xi|^2)^{\frac{1}{2}}}\ml{T}^{\gamma}_{\partial_t(-\varpi_J^{-\frac{1}{2}})}u_j(t,\cdot)\Big)_{L^2}+\mathrm{Re}\,\big(w_j(t,\cdot),\ml{R}_1^{\gamma}u_j(t,\cdot)\big)_{L^2}.
\end{align*}
Here, the spatial Lipschitz bound for $\partial_ta_{J,kl}$ obtained from (H2) in Subsection \ref{Section-Regular-in-time} allows us to apply Proposition \ref{Prop-01} to symbols of orders one and zero, which gives an order-zero remainder satisfying
\begin{align*}
\|\ml{R}_1^\gamma z\|_{L^2}\leqslant C\varphi_J(t)\|z\|_{L^2}.
\end{align*}
Combining \eqref{Eq-03} with \eqref{Boundedness-Crucial}, we obtain
\begin{align*}
\big|\mathrm{Re}\,\big(w_j(t,\cdot),\ml{R}_1^{\gamma}u_j(t,\cdot)\big)_{L^2}\big|&\lesssim\varphi_J(t)\|w_j(t,\cdot)\|_{L^2}\|u_j(t,\cdot)\|_{L^2}\\
&\lesssim\mb{E}_{j}^{\mathrm{Tara}}(t,u).
\end{align*}
By applying the same composition estimate to the term containing $\partial_tu_j$, we obtain
\begin{align*}
&\mathrm{Re}\,\Big(w_j(t,\cdot),\ml{T}^{\gamma}_{\varpi_J^{\frac{1}{2}}(\gamma^2+|\xi|^2)^{\frac{1}{2}}}\partial_tu_j(t,\cdot)\Big)_{L^2}\\
&\qquad=\mathrm{Re}\,\Big(w_j(t,\cdot),\ml{T}^{\gamma}_{\varpi_J(\gamma^2+|\xi|^2)^{\frac{1}{2}}}\ml{T}^{\gamma}_{\varpi_J^{-\frac{1}{2}}}\partial_tu_j(t,\cdot)\Big)_{L^2}+\mathrm{Re}\,\big(w_j(t,\cdot),\ml{R}_2^{\gamma}\partial_tu_j(t,\cdot)\big)_{L^2},
\end{align*}
where the order-zero remainder satisfies
\begin{align*}
\|\ml{R}_2^\gamma z\|_{L^2}\leqslant C\|z\|_{L^2}.
\end{align*}
By this bound and \eqref{dt-uj-control}, we obtain
\begin{align*}
\big|\mathrm{Re}\,\big(w_j(t,\cdot),\ml{R}_2^{\gamma}\partial_tu_j(t,\cdot)\big)_{L^2}\big|&\lesssim \|w_j(t,\cdot)\|_{L^2}\|\partial_t u_j(t,\cdot)\|_{L^2}\\
&\lesssim\mb{E}_j^{\mathrm{Tara}}(t,u).
\end{align*}
Combining these identities, we obtain
\begin{align*}
\frac{\mathrm{d}}{\mathrm{d}t}\|w_j(t,\cdot)\|_{L^2}^2&=2\mathrm{Re}\,\Big(w_j(t,\cdot),\ml{T}^{\gamma}_{\varpi_J(\gamma^2+|\xi|^2)^{\frac{1}{2}}}\Big(\ml{T}^{\gamma}_{\varpi_J^{-\frac{1}{2}}}\partial_tu_j(t,\cdot)-\ml{T}^{\gamma}_{\partial_t(\varpi_J^{-\frac{1}{2}})}u_j(t,\cdot)\Big)\Big)_{L^2}\\
&\quad+2\mathrm{Re}\,\big(w_j(t,\cdot),\ml{R}_1^{\gamma}u_j(t,\cdot)+\ml{R}_2^{\gamma}\partial_tu_j(t,\cdot)\big)_{L^2}\\
&=:2\mathrm{Re}\,\Big(w_j(t,\cdot),\ml{T}^{\gamma}_{\varpi_J(\gamma^2+|\xi|^2)^{\frac{1}{2}}}v_j(t,\cdot)\Big)_{L^2}+Q_{1,j}^{\gamma}(t,u),
\end{align*}
where
\begin{align*}
|Q_{1,j}^{\gamma}(t,u)|\lesssim \mb{E}_j^{\mathrm{Tara}}(t,u).
\end{align*}

By applying the adjoint and composition estimates in Proposition \ref{Prop-01}, we may arrive at
\begin{align*}
&\mathrm{Re}\,\Big(w_j(t,\cdot),\ml{T}^{\gamma}_{\varpi_J(\gamma^2+|\xi|^2)^{\frac{1}{2}}}v_j(t,\cdot)\Big)_{L^2}\\
&\qquad=\mathrm{Re}\,\Big(\ml{T}^{\gamma}_{\varpi_J(\gamma^2+|\xi|^2)^{\frac{1}{2}}}w_j(t,\cdot),v_j(t,\cdot)\Big)_{L^2}+\mathrm{Re}\,\big(\ml{R}^{\gamma}_3w_j(t,\cdot),v_j(t,\cdot) \big)_{L^2}\\
&\qquad=\mathrm{Re}\,\Big(\ml{T}^{\gamma}_{\varpi_J^{-\frac{1}{2}}}\ml{T}^{\gamma}_{\varpi_J^{\frac{3}{2}}(\gamma^2+|\xi|^2)^{\frac{1}{2}}}w_j(t,\cdot),v_j(t,\cdot)\Big)_{L^2}+\mathrm{Re}\,\big((\ml{R}_3^{\gamma}+\ml{R}_4^{\gamma})w_j(t,\cdot),v_j(t,\cdot)\big)_{L^2},
\end{align*}
where $\ml{R}_3^\gamma$ is the adjoint remainder for a symbol of order one and $\ml{R}_4^\gamma$ is the composition remainder for symbols of orders zero and one. By Proposition \ref{Prop-01}, both are uniformly bounded on $L^2$, which gives
\begin{align*}
\big|\mathrm{Re}\,\big((\ml{R}_3^{\gamma}+\ml{R}_4^{\gamma})w_j(t,\cdot),v_j(t,\cdot)\big)_{L^2}\big|&\lesssim\|w_j(t,\cdot)\|_{L^2}\|v_j(t,\cdot)\|_{L^2}\\
&\lesssim\mb{E}_{j}^{\mathrm{Tara}}(t,u).
\end{align*}
By substituting the definition of $w_j$ and applying Proposition \ref{Prop-01}, we obtain
\begin{align*}
&\mathrm{Re}\,\Big(\ml{T}^{\gamma}_{\varpi_J^{-\frac{1}{2}}}\ml{T}^{\gamma}_{\varpi_J^{\frac{3}{2}}(\gamma^2+|\xi|^2)^{\frac{1}{2}}}w_j(t,\cdot),v_j(t,\cdot)\Big)_{L^2}\\
&\qquad=\mathrm{Re}\,\Big( \ml{T}^{\gamma}_{\varpi_J^{-\frac{1}{2}}}\ml{T}^{\gamma}_{\varpi_J^{\frac{3}{2}}(\gamma^2+|\xi|^2)^{\frac{1}{2}}}\ml{T}^{\gamma}_{\varpi_J^{\frac{1}{2}}(\gamma^2+|\xi|^2)^{\frac{1}{2}}}u_j(t,\cdot), v_j(t,\cdot) \Big)_{L^2}\\
&\qquad=\mathrm{Re}\,\Big(\ml{T}^{\gamma}_{\varpi_J^{-\frac{1}{2}}}\ml{T}^{\gamma}_{\varpi_J^2(\gamma^2+|\xi|^2)}u_j(t,\cdot),v_j(t,\cdot)\Big)_{L^2}+\mathrm{Re}\,\Big(\ml{T}^{\gamma}_{\varpi_J^{-\frac{1}{2}}}\ml{R}_5^{\gamma}u_j(t,\cdot),v_j(t,\cdot)\Big)_{L^2},
\end{align*}
where the composition of two operators of order one gives a remainder $\ml{R}_5^\gamma$ of order one. For the fixed parameter $\gamma$, we obtain
\begin{align*}
\|\ml{R}_5^\gamma u_j(t,\cdot)\|_{L^2}\leqslant C2^j\|u_j(t,\cdot)\|_{L^2},
\end{align*}
which gives
\begin{align*}
\Big|\mathrm{Re}\,\Big(\ml{T}^{\gamma}_{\varpi_J^{-\frac{1}{2}}}\ml{R}_5^{\gamma}u_j(t,\cdot),v_j(t,\cdot)\Big)_{L^2} \Big|&\lesssim 2^j\|u_j(t,\cdot)\|_{L^2}\|v_j(t,\cdot)\|_{L^2}\\&\lesssim \mb{E}_{j}^{\mathrm{Tara}}(t,u).
\end{align*}
Combining these estimates, one derives
\begin{align*}
\frac{\mathrm{d}}{\mathrm{d}t}\|w_j(t,\cdot)\|_{L^2}^2
=2\mathrm{Re}\,\Big(\ml{T}^{\gamma}_{\varpi_J^{-\frac{1}{2}}}\ml{T}^{\gamma}_{\varpi_J^2(\gamma^2+|\xi|^2)}u_j(t,\cdot),v_j(t,\cdot)\Big)_{L^2}
+Q_{2,j}^{\gamma}(t,u),
\end{align*}
where the remainder term
\begin{align*}
Q_{2,j}^{\gamma}(t,u)
&:=Q_{1,j}^{\gamma}(t,u)
+2\mathrm{Re}\,\big((\ml{R}_3^{\gamma}+\ml{R}_4^{\gamma})w_j(t,\cdot),v_j(t,\cdot)\big)_{L^2}\\
&\ \quad+2\mathrm{Re}\,\Big(\ml{T}^{\gamma}_{\varpi_J^{-\frac{1}{2}}}\ml{R}_5^{\gamma}u_j(t,\cdot),v_j(t,\cdot)\Big)_{L^2}
\end{align*}
satisfies
\begin{align*}
	|Q_{2,j}^{\gamma}(t,u)|\lesssim \mb{E}_j^{\mathrm{Tara}}(t,u).
\end{align*}

By the construction in Subsection \ref{Section-Regular-in-time}, we have $a_{J,kl}-a_{kl}=0$ for $t\geqslant2J$ and $|a_{J,kl}-a_{kl}|\leqslant C$ for $t\leqslant2J$. Combining these bounds with \eqref{Eq-04} and \eqref{Eq-05}, we obtain

\begin{align}\label{Energy-03}
&\Big|\mathrm{Re}\,\Big(v_j(t,\cdot),\ml{T}^{\gamma}_{\varpi_J^{-\frac{1}{2}}}\sum\limits_{k,l=1}^n\partial_k\big(\ml{T}^{\gamma}_{a_{kl}}\partial_lu_j(t,\cdot)\big)\Big)_{L^2}+\mathrm{Re}\,\Big(\ml{T}^{\gamma}_{\varpi_J^{-\frac{1}{2}}}\ml{T}^{\gamma}_{\varpi_J^2(\gamma^2+|\xi|^2)}u_j(t,\cdot),v_j(t,\cdot)\Big)_{L^2}\Big|\notag\\
&\lesssim \|v_j(t,\cdot)\|_{L^2}\Big\|\sum\limits_{k,l=1}^n\ml{T}^{\gamma}_{\partial_ka_{kl}}\partial_lu_j(t,\cdot)\Big\|_{L^2}+\|v_j(t,\cdot)\|_{L^2}\Big\|\Big(\ml{T}^{\gamma}_{\varpi_J^2(\gamma^2+|\xi|^2)}-\ml{T}^{\gamma}_{\sum_{k,l=1}^{n}a_{kl}\xi_k\xi_l}\Big)u_j(t,\cdot)\Big\|_{L^2}\notag\\
&\lesssim \|v_j(t,\cdot)\|_{L^2}\big[\mb{E}_j^{\mathrm{Tara}}(t,u)\big]^{\frac{1}{2}}+\|v_j(t,\cdot)\|_{L^2}\Big\|\ml{T}^{\gamma}_{\gamma^2+\sum_{k,l=1}^n(a_{J,kl}-a_{kl})\xi_k\xi_l}u_j(t,\cdot)\Big\|_{L^2}\notag\\
&\lesssim
\mb{E}_j^{\mathrm{Tara}}(t,u)+2^{2j}\chi_{[0,2J]}(t)\|v_j(t,\cdot)\|_{L^2}\|u_j(t,\cdot)\|_{L^2}\notag\\
&\lesssim \big(1+2^j\chi_{[0,2J]}(t)\big)\mb{E}_j^{\mathrm{Tara}}(t,u),
\end{align}
where the identity
\begin{align*}
\varpi_J^2(\gamma^2+|\xi|^2)=\gamma^2+\sum\limits_{k,l=1}^na_{J,kl}\,\xi_k\xi_l
\end{align*}
was used.
\subsection{Estimate of the localized total energy} \label{Sec:localized total energy}
To combine the localized estimates, we define
\begin{align}\label{Definition-qj}
q_j(t):=2^{-j}[\varphi_J(t)]^2+2^j\chi_{[0,2J]}(t)
\end{align}
and the para-differential remainder
\begin{align*}
\ml{P}u:=\sum\limits_{k,l=1}^n
\partial_k\big(\big(a_{kl}-\ml{T}^{\gamma}_{a_{kl}}\big)\partial_lu\big).
\end{align*}
Then, \eqref{Defn-L}, \eqref{Energy-01}, \eqref{Energy-02}, \eqref{Energy-03}, and \eqref{Definition-Q-Commutator} imply
\begin{align}\label{Localized-Energy-Inequality}
\frac{\mathrm{d}}{\mathrm{d}t}\mb{E}^{\mathrm{Tara}}_j(t,u)
&\leqslant C\big(1+q_j(t)\big)\mb{E}^{\mathrm{Tara}}_j(t,u)
+|Q^{\gamma}_{\mathrm{Com},j}(t,u)|\notag\\
&\quad+C\big[\mb{E}^{\mathrm{Tara}}_j(t,u)\big]^{\frac{1}{2}}
\big(\|(\ml{L}u(t,\cdot))_j\|_{L^2}+\|(\ml{P}u(t,\cdot))_j\|_{L^2}\big).
\end{align}
To sum these estimates, we need weights which absorb $q_j(t)$ in \eqref{Definition-qj} and allow comparison between dyadic levels with an integrable time-dependent factor.

\subsubsection{\texorpdfstring{Construction and comparison of the weights}{Construction and comparison of the weights}}\label{Sec:weight-comparison}
By \eqref{VARPHI-Setting} and \eqref{Definition-J}, we have
\begin{align*}
	q_j(t)\leqslant C(j+1)^\alpha\frac{J_j}{(t+J_j)^2}.
\end{align*}
To absorb this term in \eqref{Localized-Energy-Inequality}, we define the phase
\begin{align}\label{Definition-Gj}
G_j(t):=M(j+1)^\alpha\int_0^t\frac{J_j}{(\tau+J_j)^2}\,\mathrm{d}\tau
=M(j+1)^\alpha\frac{t}{t+J_j},
\end{align}
where $M>0$ will be chosen in Lemma \ref{Lemma-Absorption}.
The corresponding energy weight is
\begin{align*}
\omega_j(t):=\exp\Big(-\frac{1}{2}G_j(t)\Big).
\end{align*}

\begin{lemma}[Absorption estimate]\label{Lemma-Absorption}
The constant $M>0$, independent of $j$ and $t$, can be chosen such that
\begin{align}\label{Absorption-qj}
G_j'(t)\geqslant Cq_j(t)
\end{align}
for all $t\in[0,T]$ and $j\in\mb{N}_0$, where $C>0$ is the constant needed in \eqref{Localized-Energy-Inequality}. Moreover,
\begin{align}\label{Size-Gj}
0\leqslant G_j(t)\leqslant M(j+1)^{\alpha}.
\end{align}
\end{lemma}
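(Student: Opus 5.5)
Since $G_j'(t)=M(j+1)^\alpha J_j(t+J_j)^{-2}$, the plan is to prove the pointwise bound
\begin{align*}
q_j(t)\leqslant C_*(j+1)^\alpha\frac{J_j}{(t+J_j)^2}
\end{align*}
with $C_*$ independent of $j$ and $t$. This is the inequality stated just before \eqref{Definition-Gj}. Once it holds, \eqref{Absorption-qj} follows by taking $M:=CC_*$. The bound \eqref{Size-Gj} is immediate from the closed form $G_j(t)=M(j+1)^\alpha\frac{t}{t+J_j}$, since $0\leqslant\frac{t}{t+J_j}<1$ for $t\geqslant0$. So the work reduces to verifying the pointwise bound, and I would treat the two summands of $q_j$ in \eqref{Definition-qj} separately.

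\emph{The term $2^j\chi_{[0,2J]}(t)$.} For $t\leqslant 2J_j$ we have $(t+J_j)^2\leqslant 9J_j^2$, so $\frac{J_j}{(t+J_j)^2}\geqslant\frac{1}{9J_j}$. For large $j$, \eqref{Definition-J} gives $J_j=(j+1)^\alpha2^{-j-4}$. Then $(j+1)^\alpha\frac{1}{9J_j}=\frac{16}{9}2^j$, which dominates $2^j$. For the finitely many small $j$ where $J_j=\frac{T}{4}$, both sides are bounded above and below by constants depending on $T$. These cases only enlarge $C_*$.

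\emph{The term $2^{-j}[\varphi_J(t)]^2$.} By \eqref{VARPHI-Setting}, this term vanishes for $t\leqslant J/2$. For $t\geqslant J/2$ we have $t+J_j\leqslant 3t$, so it suffices to show
\begin{align*}
2^{-j}t^{-2}\Big(\log\frac1t\Big)^{2\alpha}\lesssim (j+1)^\alpha J_j\,t^{-2},
\end{align*}
that is, $(\log\frac1t)^{2\alpha}\lesssim (j+1)^{2\alpha}2^{-4}$ for large $j$. This is the main point where the choice of the factor $(j+1)^\alpha$ in $J_j$ matters. Since $t\geqslant J_j/2=(j+1)^\alpha2^{-j-5}$, we get $\log\frac1t\leqslant (j+5)\log2$, which is $\lesssim j+1$. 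Hence $(\log\frac1t)^{2\alpha}\lesssim (j+1)^{2\alpha}$, as needed. On the other end, $t\leqslant T<1$ keeps $\log\frac1t\geqslant\log\frac1T>0$, so no degeneration occurs. The small-$j$ blocks, with $J_j=T/4$ and $t\geqslant T/8$, again contribute only bounded quantities.

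Combining the two cases gives the pointwise bound with a uniform $C_*$, and hence \eqref{Absorption-qj} with $M=CC_*$. The only delicate step is the logarithmic comparison in the second case, namely checking that $\log\frac1t$ is at most of order $j$ on the support of $\varphi_J$. Everything else is elementary bookkeeping.
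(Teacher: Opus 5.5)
Your proposal is correct and follows essentially the same route as the paper's proof. Both split $q_j$ into the cut-off term on $[0,2J_j]$ and the $\varphi_J^2$ term on $[J_j/2,T]$, use $\log\frac{1}{t}\lesssim j+1$ on the latter, absorb the finitely many blocks with $J_j=\frac{T}{4}$ into the constant, and read off \eqref{Size-Gj} from the closed form of $G_j$; your explicit constants such as $\frac{16}{9}2^j$ and $\log\frac1t\leqslant(j+5)\log2$ simply make the paper's $\lesssim$ steps concrete.
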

\begin{proof}
A direct computation gives
\begin{align*}
G_j'(t)=M(j+1)^{\alpha}\frac{J_j}{(t+J_j)^2}.
\end{align*}
We first consider all sufficiently large $j$, for which $J_j=(j+1)^{\alpha}2^{-j-4}$. If $t\leqslant2J_j$, then
\begin{align*}
G_j'(t)\geqslant\frac{M}{9}(j+1)^{\alpha}J_j^{-1}
\gtrsim M2^j,
\end{align*}
which controls the second term in \eqref{Definition-qj}. On the other hand, $\varphi_J(t)=0$ for $t\leqslant\frac{J_j}{2}$. If $t\geqslant\frac{J_j}{2}$, then
\begin{align*}
\log\frac{1}{t}\leqslant C(j+1)
\end{align*}
and, according to \eqref{VARPHI-Setting}, one arrives at
\begin{align*}
2^{-j}[\varphi_J(t)]^2
&\lesssim2^{-j}t^{-2}(j+1)^{2\alpha}\\
&\lesssim (j+1)^{\alpha}\frac{J_j}{(t+J_j)^2}
=M^{-1}G_j'(t).
\end{align*}
By increasing $M$ to include both contributions and the finitely many low-frequency blocks, we obtain \eqref{Absorption-qj}. The bound \eqref{Size-Gj} follows from \eqref{Definition-Gj}.
\end{proof}

For the comparison of the weights, let us put
\begin{align*}
\ell(t):=1+\log\frac{1}{t}\ \ \text{for}\ \ t\in(0,T].
\end{align*}

\begin{lemma}[Phase comparison]\label{Lemma-Phase-Comparison}
For the fixed constant $M$ in Lemma \ref{Lemma-Absorption}, there exists a constant $C>0$, possibly depending on $M$, such that
\begin{align}\label{Phase-Comparison}
|G_j(t)-G_m(t)|
\leqslant C\big([\ell(t)]^{\alpha}+(1+|j-m|)^{\alpha}\big)
\end{align}
for all $t\in(0,T]$ and $j,m\in\mb{N}_0$.
\end{lemma}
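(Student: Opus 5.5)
We would prove \eqref{Phase-Comparison} directly from the closed form \eqref{Definition-Gj}. By symmetry we may assume $j\geqslant m$. We split
\begin{align*}
G_j(t)-G_m(t)
&=M\big[(j+1)^\alpha-(m+1)^\alpha\big]\frac{t}{t+J_j}
+M(m+1)^\alpha\Big(\frac{t}{t+J_j}-\frac{t}{t+J_m}\Big)\\
&=:I_1+I_2
\end{align*}
and estimate the two pieces separately.

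\emph{The term $I_1$.} Since $0\leqslant\alpha<1$, the map $x\mapsto x^\alpha$ is subadditive on $[0,\infty)$. Hence
\begin{align*}
0\leqslant(j+1)^\alpha-(m+1)^\alpha\leqslant(j-m)^\alpha\leqslant(1+|j-m|)^\alpha .
\end{align*}
Together with $0\leqslant\frac{t}{t+J_j}\leqslant1$, this gives $|I_1|\leqslant M(1+|j-m|)^\alpha$. This bound holds for every $t$ and needs no information about $J_j$.

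\emph{The term $I_2$.} We compute
\begin{align*}
\frac{t}{t+J_j}-\frac{t}{t+J_m}=\frac{t\,(J_m-J_j)}{(t+J_j)(t+J_m)} .
\end{align*}
The map $x\mapsto(x+1)^\alpha2^{-x}$ is decreasing for $x\geqslant0$, because the ratio of consecutive values is $\big(\frac{j+2}{j+1}\big)^\alpha\frac12<1$. Taking the minimum with $\frac T4$ preserves monotonicity, so $J_m\geqslant J_j$ for $m\leqslant j$. Consequently
\begin{align*}
0\leqslant I_2\leqslant M(m+1)^\alpha\frac{J_m}{t+J_m}\leqslant M(m+1)^\alpha\min\Big\{1,\frac{J_m}{t}\Big\}.
\end{align*}
It remains to show that $(m+1)^\alpha\min\{1,J_m/t\}\leqslant C[\ell(t)]^\alpha$. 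We split into two cases, and we expect this step to be the only real obstacle.

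\emph{Case $m+1\leqslant2\ell(t)$.} Using $\min\{\cdot\}\leqslant1$, we directly get the bound $2^\alpha[\ell(t)]^\alpha$.

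\emph{Case $m+1>2\ell(t)$.} Since $\ell(t)=1+\log\frac1t$, this condition is equivalent to $\frac1t<e^{(m+1)/2-1}$. Combined with $J_m\leqslant(m+1)^\alpha2^{-m-4}$, it yields
\begin{align*}
(m+1)^\alpha\frac{J_m}{t}\leqslant(m+1)^{2\alpha}2^{-m-4}e^{\frac{m+1}{2}-1}\lesssim(m+1)^{2\alpha}\Big(\frac{\sqrt e}{2}\Big)^m .
\end{align*}
Because $\sqrt e<2$, the right-hand side is bounded uniformly in $m$, and so it is $\leqslant C\leqslant C[\ell(t)]^\alpha$ since $\ell\geqslant1$.

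The finitely many indices for which $J_m=\frac T4$ only change the constants, again because $\ell(t)\geqslant1$. Adding the bounds for $I_1$ and $I_2$ gives \eqref{Phase-Comparison} with a constant depending on $M$, $\alpha$ and $T$.
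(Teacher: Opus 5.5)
Your proof is correct, and it takes a genuinely different route from the paper.

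\textbf{The paper's route.} The paper introduces a transition index $N=N(t)$ with $J_N\leqslant t<J_{N-1}$ and shows $N\lesssim\ell(t)$ and $t2^N\lesssim(N+1)^\alpha$. It then proves that each phase is in one of two regimes:
\begin{itemize}
\item for $k<N$ it is small, $G_k(t)\lesssim(N+1)^\alpha$;
\item for $k\geqslant N$ it is close to its saturated value, $|G_k(t)-M(k+1)^\alpha|\lesssim(N+1)^\alpha$.
\end{itemize}
The lemma then follows from a three-case analysis (both indices below $N$, both at or above $N$, mixed), combined with the concavity inequality.

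\textbf{Your route.} You avoid the transition index altogether. Your algebraic splitting separates two effects:
\begin{itemize}
\item $I_1$ is the difference of the amplitudes $(k+1)^\alpha$, controlled by subadditivity uniformly in $t$;
\item $I_2$ is the difference of the scales, signed by the monotonicity of $J_k$ and bounded by $M(m+1)^\alpha\min\{1,J_m/t\}$.
\end{itemize}
This reduces everything to a single one-index estimate. You settle it by comparing $m+1$ with $2\ell(t)$ and using $\sqrt{e}<2$.

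\textbf{Comparison.} Your argument is shorter and treats all $t\in(0,T]$ at once, without the reduction to small $t$. It also does not need to justify the existence of $N(t)$, which implicitly relies on the same monotonicity of $J_k$. Its constants are explicit. The paper's argument gives more structural information: the two-regime picture of the phases identifies $[\ell(t)]^\alpha$ as $(N(t)+1)^\alpha$.

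\textbf{Minor points.} The real function $x\mapsto(x+1)^\alpha2^{-x}$ is not decreasing on all of $[0,+\infty)$ when $\alpha>\log 2$, since its logarithmic derivative at $x=0$ equals $\alpha-\log2$. What your ratio argument actually proves, and all you need, is that the sequence $\big((k+1)^\alpha2^{-k}\big)_{k\in\mb{N}_0}$ is decreasing. Also, your closing remark about indices with $J_m=\frac{T}{4}$ is unnecessary, since $J_m\leqslant(m+1)^\alpha2^{-m-4}$ holds for every $m\in\mb{N}_0$.
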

\begin{proof}
It suffices to consider small $t>0$, since the remaining time interval only affects the constant. Let $N=N(t)$ be the transition index determined by
\begin{align*}
J_N\leqslant t<J_{N-1}.
\end{align*}
For sufficiently large $N$, the inequality $t<J_{N-1}=N^\alpha2^{-N-3}$ gives
\begin{align*}
(N+3)\log2
&<\log\frac{1}{t}+\alpha\log N\\
&\leqslant\log\frac{1}{t}+\frac{N\log2}{2}+C.
\end{align*}
The same inequality also gives
\begin{align*}
t2^N<2^NJ_{N-1}=\frac{N^\alpha}{8}\leqslant C(N+1)^\alpha.
\end{align*}
Hence, after including the finitely many remaining indices in the constant, we obtain
\begin{align}\label{Properties-N}
N\leqslant C\ell(t)
\ \ \text{and}\ \
t2^N\leqslant C(N+1)^{\alpha}.
\end{align}
If $k<N$, then
\begin{align*}
G_k(t)\leqslant M(k+1)^{\alpha}\frac{t}{J_k}
\lesssim t2^k
\lesssim(N+1)^{\alpha}.
\end{align*}
If $k\geqslant N$, then
\begin{align*}
|G_k(t)-M(k+1)^{\alpha}|
&=M(k+1)^{\alpha}\frac{J_k}{t+J_k}
\lesssim\frac{(k+1)^{2\alpha}2^{-k}}{t}
\lesssim(N+1)^{\alpha},
\end{align*}
where we used $t\geqslant J_N$ and the exponential decay of $2^{N-k}$. For $\alpha\in(0,1)$, recalling the concavity inequality
\begin{align*}
|(j+1)^{\alpha}-(m+1)^{\alpha}|
\leqslant|j-m|^{\alpha},
\end{align*}
we distinguish three cases.
\begin{itemize}
	\item If $j,m<N$, both phases are bounded by $C(N+1)^\alpha$.
	\item If $j,m\geqslant N$, subtracting $M(j+1)^\alpha$ and $M(m+1)^\alpha$ gives the same error bound plus $M|j-m|^\alpha$.
	\item If $j<N\leqslant m$, then
	\begin{align*}
		(m+1)^\alpha
		\leqslant(N+1)^\alpha+(m-N)^\alpha
		\leqslant(N+1)^\alpha+|m-j|^\alpha.
	\end{align*}
The opposite mixed case is identical.
\end{itemize}
 Combining these bounds, we obtain
\begin{align*}
|G_j(t)-G_m(t)|
\leqslant C\big((N+1)^{\alpha}+(1+|j-m|)^{\alpha}\big).
\end{align*}
For $\alpha=0$, the same bound follows directly from $(k+1)^{\alpha}=1$.
Combining it with \eqref{Properties-N}, we obtain \eqref{Phase-Comparison}.
\end{proof}

By Lemma \ref{Lemma-Phase-Comparison}, we may choose $C>0$ in
\begin{align*}
\rho_{\alpha}(t):=C\exp\big(C[\ell(t)]^{\alpha}\big)
\end{align*}
such that
\begin{align}\label{Weight-Ratio}
\frac{\omega_j(t)}{\omega_m(t)}
&=\exp\Big(\frac{G_m(t)-G_j(t)}{2}\Big)\notag\\
&=\exp\Big(\frac{M}{2}\Big((m+1)^\alpha\frac{t}{t+J_m}-(j+1)^\alpha\frac{t}{t+J_j}\Big)\Big)\notag\\
&\leqslant\rho_{\alpha}(t)\exp\big(C(1+|j-m|)^{\alpha}\big).
\end{align}
Since $\alpha<1$, the change of variables $s=\log\frac{1}{t}$ gives
\begin{align}\label{Integrability-rho}
\int_0^T\rho_{\alpha}(t)\,\mathrm{d}t
\lesssim\int_{\log\frac{1}{T}}^{+\infty}
\exp\big(C(1+s)^{\alpha}-s\big)\,\mathrm{d}s<+\infty,
\end{align}
which justifies the integrability of $\rho_{\alpha}$.

\subsubsection{Weighted estimates for the coupled dyadic terms} \label{Sec:coupled dyadic terms}
For $s\in\mb{R}$, we introduce the time-dependent norm
\begin{align*}
\|f\|_{H^s_{\omega(t)}}^2:=
\sum\limits_{j=0}^{+\infty}[\omega_j(t)]^2\,2^{2sj}
\|f_j\|_{L^2}^2,
\end{align*}
where $f_j:=\Delta_jf$. By \eqref{Size-Gj} and \eqref{Weighted-Sobolev-Norm} with $K=M$, we have
\begin{align*}
\|f\|_{H^s_{\alpha,M}}\leqslant\|f\|_{H^s_{\omega(t)}}\lesssim\|f\|_{H^s}
\end{align*}
uniformly for $t\in[0,T]$, and $G_j(0)=0$ gives $\|f\|_{H^s_{\omega(0)}}\sim\|f\|_{H^s}$.
The time dependence of $[\omega_j(t)]^2$ allows us to absorb $q_j(t)$ when differentiating the energy, while $H^s_{\alpha,M}$ measures the loss in Theorem \ref{Thm-1}.
We also introduce the total energy
\begin{align}\label{Definition-Total-Energy}
\ml{E}_{\theta}(t,u):=
\sum\limits_{j=0}^{+\infty}[\omega_j(t)]^2\,2^{-2j\theta}
\mb{E}^{\mathrm{Tara}}_j(t,u),
\end{align}
which follows the total energy construction in \cite[Section 4.4]{Colombini-DelSanto-Fanelli=2023}, with weights adapted to the additional logarithmic factor in (H2).
\begin{lemma}[Weighted dyadic estimates]\label{Lemma-Weighted-Couplings}
Let $\alpha\in[0,1)$ and $\theta\in(0,1)$. For every $t\in(0,T]$, one has
\begin{align}\label{Weighted-Commutator}
\sum\limits_{j=0}^{+\infty}[\omega_j(t)]^2\,2^{-2j\theta}
|Q^{\gamma}_{\mathrm{Com},j}(t,u)|
\leqslant C\rho_{\alpha}(t)\ml{E}_{\theta}(t,u).
\end{align}
Moreover,
\begin{align}\label{Weighted-P}
\|\ml{P}u(t,\cdot)\|_{H^{-\theta}_{\omega(t)}}
\leqslant C\rho_{\alpha}(t)
\|u(t,\cdot)\|_{H^{1-\theta}_{\omega(t)}}.
\end{align}
\end{lemma}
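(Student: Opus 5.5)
For \eqref{Weighted-Commutator}, I will start from \eqref{Definition-Q-Commutator} and use Cauchy--Schwarz, the uniform $L^2$ boundedness of $\ml{T}^{\gamma}_{\varpi_J^{-\frac12}}$, and $\|v_j\|_{L^2}\leqslant[\mb{E}^{\mathrm{Tara}}_j]^{\frac12}$. This gives $|Q^{\gamma}_{\mathrm{Com},j}|\lesssim[\mb{E}^{\mathrm{Tara}}_j]^{\frac12}\|\ml{B}_j(t,u)\|_{L^2}$. Lemma \ref{Lemma-Raw-Commutator} then bounds $\|\ml{B}_j\|_{L^2}$ by $\sum_{|m-j|\leqslant N_0}2^m\|u_m\|_{L^2}$, and \eqref{Eq-03} gives $2^m\|u_m\|_{L^2}\lesssim[\mb{E}^{\mathrm{Tara}}_m]^{\frac12}$. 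Hence
\begin{align*}
\sum_{j}\omega_j^2 2^{-2j\theta}|Q^{\gamma}_{\mathrm{Com},j}|\lesssim\sum_{|j-m|\leqslant N_0}\frac{\omega_j}{\omega_m}2^{(m-j)\theta}\Big(\omega_j2^{-j\theta}[\mb{E}^{\mathrm{Tara}}_j]^{\frac12}\Big)\Big(\omega_m2^{-m\theta}[\mb{E}^{\mathrm{Tara}}_m]^{\frac12}\Big).
\end{align*}
Because $|j-m|\leqslant N_0$, the weight ratio \eqref{Weight-Ratio} is at most $\rho_\alpha(t)\exp(C(1+N_0)^\alpha)$, and $2^{(m-j)\theta}\leqslant2^{N_0}$. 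Young's inequality over the finitely many diagonals then yields $C\rho_\alpha(t)\ml{E}_\theta(t,u)$. Note that $\rho_\alpha\geqslant C\geqslant1$ may be assumed.

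For \eqref{Weighted-P}, I will use the standard Bony decomposition of the para-product remainder $a-\ml{T}^\gamma_a$. Writing $(a-\ml{T}^\gamma_a)z$ with $z=\partial_lu$, we get
\begin{align*}
(a-\ml{T}^\gamma_a)z=\sum_{m}\Delta_m a\,S_{m+2}z+(\text{low-frequency terms fixed by }\gamma).
\end{align*}
This is the paraproduct $\ml{T}_z a$ plus the remainder $R(a,z)$. Since $a$ is Lipschitz, $\|\Delta_ma\|_{L^\infty}\lesssim2^{-m}$. Each term $\Delta_ma\,S_{m+2}\partial_lu$ has spectrum in a ball of radius $\sim2^m$, so $\Delta_j$ of it vanishes unless $m\geqslant j-N_0$. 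This gives
\begin{align*}
\|\Delta_j\partial_k(a-\ml{T}^\gamma_a)\partial_lu\|_{L^2}\lesssim 2^j\sum_{m\geqslant j-N_0}2^{-m}\sum_{k'\leqslant m+2}2^{k'}\|u_{k'}\|_{L^2}.
\end{align*}
Multiplying by $\omega_j2^{-j\theta}$ and writing $\omega_j=\frac{\omega_j}{\omega_{k'}}\omega_{k'}$, the coefficient in front of $\omega_{k'}2^{(1-\theta)k'}\|u_{k'}\|_{L^2}$ is
\begin{align*}
\frac{\omega_j}{\omega_{k'}}2^{(1-\theta)(j-k')}\sum_{m\geqslant\max(j,k')-C}2^{k'-m}\lesssim\frac{\omega_j}{\omega_{k'}}2^{(1-\theta)(j-k')}\min\big(1,2^{k'-j}\big).
\end{align*}
For $k'\geqslant j$ this is $2^{-(1-\theta)(k'-j)}$, and for $k'<j$ it is $2^{-\theta(j-k')}$. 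The estimate \eqref{Weight-Ratio} gives $\omega_j/\omega_{k'}\leqslant\rho_\alpha\exp(C(1+|j-k'|)^\alpha)$. Since $\alpha<1$, the factor $\exp(C|j-k'|^\alpha)$ is dominated by $2^{-\epsilon|j-k'|/2}$ decay for any $\epsilon>0$. Using $\theta\in(0,1)$ on both sides, the kernel is therefore summable in $|j-k'|$ uniformly, and Schur's test gives \eqref{Weighted-P}.

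\textbf{Main obstacle.} The main obstacle is the \eqref{Weighted-P} bookkeeping, where the coupling between $j$ and $k'$ is not bounded. The key is that both directions have geometric decay exactly because $0<\theta<1$: the low-to-high direction decays like $2^{-\theta(j-k')}$ (this is why $\theta>0$ is needed), and the high-to-low direction decays like $2^{-(1-\theta)(k'-j)}$. The subexponential factor $\exp(C|j-k'|^\alpha)$ coming from the weights must be absorbed by this decay; for $\alpha=0$ it is just a constant. I will also check separately the finitely many low-frequency blocks tied to $\mu(\gamma)$; these only change constants.
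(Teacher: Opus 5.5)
Your proof is correct and follows the paper's argument. For \eqref{Weighted-Commutator} you use Lemma \ref{Lemma-Raw-Commutator}, the band condition $|j-m|\leqslant N_0$ and \eqref{Weight-Ratio} exactly as the paper does. For \eqref{Weighted-P}, your grouping $(a-\mathcal{T}^\gamma_a)z=\sum_{r\geqslant\mu}\Delta_r a\,S_{r+2}z$ is just \eqref{Exact-Paraproduct-Remainder} regrouped by $r$, and it produces the same two-sided kernel $2^{-\min\{\theta,1-\theta\}|j-k'|}$, which absorbs $\exp(C(1+|j-k'|)^\alpha)$ before Young's inequality. The only difference is that you use the ball support of each product and sum $2^{-r}$ over $r\geqslant\max(j,k')-C$, whereas the paper splits into the cases $r\geqslant m+4$ and $m-2\leqslant r\leqslant m+3$; both give the same bound.
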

\begin{proof}
By combining Lemma \ref{Lemma-Raw-Commutator} with the uniform boundedness of
$\ml{T}^{\gamma}_{\varpi_J^{-\frac{1}{2}}}$ and \eqref{Equivalence-Tarama-Classical}, we deduce
\begin{align*}
|Q^{\gamma}_{\mathrm{Com},j}(t,u)|
\lesssim\big[\mb{E}^{\mathrm{Tara}}_j(t,u)\big]^{\frac{1}{2}}
\sum\limits_{m=0}^{+\infty}c_{jm}
\big[\mb{E}^{\mathrm{Tara}}_m(t,u)\big]^{\frac{1}{2}}.
\end{align*}
Let us set
\begin{align*}
X_j(t):=\omega_j(t)2^{-j\theta}
\big[\mb{E}^{\mathrm{Tara}}_j(t,u)\big]^{\frac{1}{2}}.
\end{align*}
Since $c_{jm}=0$ for $|j-m|>N_0$, \eqref{Weight-Ratio} gives
\begin{align*}
\sum\limits_{j=0}^{+\infty}[\omega_j(t)]^2\,2^{-2j\theta}
|Q^{\gamma}_{\mathrm{Com},j}(t,u)|&\lesssim\sum\limits_{j,m=0}^{+\infty}c_{jm}
\frac{\omega_j(t)}{\omega_m(t)}2^{\theta(m-j)}X_j(t)X_m(t)\\
&\lesssim\rho_{\alpha}(t)
\sum\limits_{j,m=0}^{+\infty}c_{jm}X_j(t)X_m(t)\\
&\lesssim\rho_{\alpha}(t)\ml{E}_{\theta}(t,u),
\end{align*}
which proves \eqref{Weighted-Commutator}.

In order to prove \eqref{Weighted-P}, we expand the coefficient and $u$ into dyadic blocks and first establish
\begin{align}\label{Dyadic-P}
2^{-j\theta}\|(\ml{P}u)_j(t,\cdot)\|_{L^2}
\leqslant C\sum\limits_{m=0}^{+\infty}
2^{-\delta_{\theta}|j-m|}
\,2^{m(1-\theta)}\|u_m(t,\cdot)\|_{L^2},
\end{align}
with
\begin{align*}
	\delta_{\theta}:=\min\{\theta,1-\theta\}.
\end{align*}
Fixing $k,l$, we set $a:=a_{kl}$ and $a_r:=\Delta_ra$. By \eqref{T-f-gamma}, with $\mu=\lfloor\log_2\gamma\rfloor\geqslant1$, we have
\begin{align}\label{Exact-Paraproduct-Remainder}
(a-\ml{T}_a^\gamma)\partial_lu
&=\sum\limits_{m=0}^{\mu+2}\sum\limits_{r=\mu}^{+\infty}
a_r\partial_lu_m
+\sum\limits_{m=\mu+3}^{+\infty}\sum\limits_{r=m-2}^{+\infty}
a_r\partial_lu_m.
\end{align}
The first sum contains the full high-frequency coefficient tail generated by the low-frequency term in \eqref{T-f-gamma}.
Since every pair in these sums satisfies $r\geqslant1$ and $r\geqslant m-2$, the condition (H1) gives
\begin{align*}
\|a_r(t,\cdot)\|_{L^\infty}\leqslant C2^{-r}\|a(t,\cdot)\|_{\mathrm{Lip}}
\ \ \text{for}\ \  r\geqslant1.
\end{align*}
We separate the two possible frequency interactions.
\begin{itemize}
	\item If $r\geqslant m+4$, then $\Delta_j(a_r\partial_lu_m)$ vanishes unless $|j-r|\leqslant N_1$, where $N_1$ is fixed. Thus,
	\begin{align}\label{High-Coefficient-Low-u}
		2^{-j\theta}\|\Delta_j\partial_k(a_r\partial_lu_m)(t,\cdot)\|_{L^2}
		&\leqslant C2^{j(1-\theta)}2^{-r}2^m\|u_m(t,\cdot)\|_{L^2}\notag\\
		&\leqslant C2^{-\theta(j-m)}
		2^{m(1-\theta)}\|u_m(t,\cdot)\|_{L^2}.
	\end{align}
	For each $j$, only a bounded number of such $r$ occur.
	\item If $m-2\leqslant r\leqslant m+3$, the product has Fourier support in a ball of radius $C2^m$, so only $j\leqslant m+N_1$ can occur. In this case,
	\begin{align}\label{Comparable-Frequencies}
		2^{-j\theta}\|\Delta_j\partial_k(a_r\partial_lu_m)(t,\cdot)\|_{L^2}
		&\leqslant C2^{j(1-\theta)}\|u_m(t,\cdot)\|_{L^2}\notag\\
		&=C2^{-(1-\theta)(m-j)}
		2^{m(1-\theta)}\|u_m(t,\cdot)\|_{L^2}.
	\end{align}
\end{itemize}
For each $m$, there are at most six values of $r$ in the second case. By summing \eqref{High-Coefficient-Low-u} and \eqref{Comparable-Frequencies} and absorbing the fixed shifts into the constant, we obtain \eqref{Dyadic-P}.

By multiplying \eqref{Dyadic-P} by $\omega_j(t)$ and using \eqref{Weight-Ratio}, we obtain
\begin{align*}
&\omega_j(t)2^{-j\theta}\|(\ml{P}u)_j(t,\cdot)\|_{L^2}\\
&\qquad\leqslant C\rho_{\alpha}(t)
\sum\limits_{m=0}^{+\infty}
2^{-\delta_{\theta}|j-m|}
\exp\big(C(1+|j-m|)^{\alpha}\big)
\omega_m(t)2^{m(1-\theta)}\|u_m(t,\cdot)\|_{L^2}.
\end{align*}
Thanks to $\alpha<1$, the sequence
\begin{align*}
\big(2^{-\delta_{\theta}|r|}
\exp(C(1+|r|)^{\alpha})\big)_{r\in\mb{Z}}
\end{align*}
belongs to $\ell^1(\mb{Z})$, so Young's inequality for sequences gives \eqref{Weighted-P}.
\end{proof}

\subsubsection{Conclusion of the energy estimate} \label{Sec: final energy estimate}
By \eqref{Equivalence-Tarama-Classical} and $u\in\ml{C}^2([0,T],H^\infty)$, the summands in \eqref{Definition-Total-Energy} have a summable majorant independent of $t$.
Since each regularized coefficient is constant near $t=0$, dominated convergence gives continuity of the total energy at $0$.
On $[\delta,T]$ with $\delta>0$, the time derivatives of the symbols and the phases are uniformly bounded in $j$, so the spatial regularity of $u$ also justifies termwise differentiation.

By differentiating \eqref{Definition-Total-Energy}, applying \eqref{Localized-Energy-Inequality}
and choosing $M$ as in Lemma \ref{Lemma-Absorption}, we absorb the $q_j$ terms
with $-G_j'(t)$ and obtain
\begin{align*}
\frac{\mathrm{d}}{\mathrm{d}t}\ml{E}_{\theta}(t,u)
&\leqslant C\ml{E}_{\theta}(t,u)
+\sum\limits_{j=0}^{+\infty}[\omega_j(t)]^2\,2^{-2j\theta}
|Q^{\gamma}_{\mathrm{Com},j}(t,u)|\\
&\quad+C[\ml{E}_{\theta}(t,u)]^{\frac{1}{2}}
\Big(\|\ml{L}u(t,\cdot)\|_{H^{-\theta}_{\omega(t)}}
+\|\ml{P}u(t,\cdot)\|_{H^{-\theta}_{\omega(t)}}\Big).
\end{align*}
The equivalence \eqref{Equivalence-Tarama-Classical} gives
\begin{align*}
\|u(t,\cdot)\|_{H^{1-\theta}_{\omega(t)}}
+\|\partial_tu(t,\cdot)\|_{H^{-\theta}_{\omega(t)}}
\lesssim[\ml{E}_{\theta}(t,u)]^{\frac{1}{2}}.
\end{align*}
By this bound, Lemma \ref{Lemma-Weighted-Couplings} and $0<\omega_j(t)\leqslant1$, we derive
\begin{align*}
\frac{\mathrm{d}}{\mathrm{d}t}\ml{E}_{\theta}(t,u)
\leqslant C\big(1+\rho_{\alpha}(t)\big)\ml{E}_{\theta}(t,u)
+C[\ml{E}_{\theta}(t,u)]^{\frac{1}{2}}
\|\ml{L}u(t,\cdot)\|_{H^{-\theta}}.
\end{align*}

By applying Gronwall's inequality to $(\ml{E}_\theta+\eta)^{\frac12}$ on $[\delta,T]$ and letting $\eta\downarrow0$ and $\delta\downarrow0$, we can deduce from \eqref{Integrability-rho} that
\begin{align}\label{Bound-Total-Energy}
\sup\limits_{t\in[0,T]}[\ml{E}_{\theta}(t,u)]^{\frac{1}{2}}
\leqslant C\Big(
[\ml{E}_{\theta}(0,u)]^{\frac{1}{2}}
+\int_0^T\|\ml{L}u(\tau,\cdot)\|_{H^{-\theta}}\,
\mathrm{d}\tau\Big).
\end{align}
Since $G_j(0)=0$ by \eqref{Definition-Gj}, we obtain from
\eqref{Equivalence-Tarama-Classical} and the Littlewood--Paley characterization
\begin{align*}
[\ml{E}_{\theta}(0,u)]^{\frac{1}{2}}
\lesssim
\|u(0,\cdot)\|_{H^{1-\theta}}
+\|\partial_tu(0,\cdot)\|_{H^{-\theta}}.
\end{align*}
By \eqref{Size-Gj}, we also have
\begin{align*}
[\omega_j(t)]^2\geqslant\exp\big(-M(j+1)^{\alpha}\big).
\end{align*}
Therefore, \eqref{Bound-Total-Energy} proves \eqref{Energy-Est} with $K=M$.

Finally, for any $\epsilon>0$ and $\alpha\in(0,1)$, one has
\begin{align*}
2^{-2\epsilon j}
\leqslant C_{\epsilon}\exp\big(-M(j+1)^{\alpha}\big)
\ \ \text{for all}\ \ j\in\mb{N}_0.
\end{align*}
It follows that
\begin{align*}
\|f\|_{H^{s-\epsilon}}\leqslant C_{\epsilon}\|f\|_{H^s_{\alpha,M}},
\end{align*}
which completes the proof of \eqref{Energy-Est-Sobolev} and Theorem \ref{Thm-1}.

\section{Final remarks}\label{Sec:concluding remark}
Under (H1)--(H2), Theorem \ref{Thm-1} gives an energy estimate with no loss for $\alpha=0$ and a stretched logarithmic upper bound for the loss when $0<\alpha<1$.
For $\alpha=0$, replacing spatial Lipschitz continuity by log-Lipschitz continuity leads to the estimate with a linearly increasing loss in \cite[Theorem 2.2]{Colombini-DelSanto-Fanelli=2023}.
For coefficients depending only on $t$, faster oscillations give the estimates with a fixed loss discussed in \cite{Colombini-DelSanto-Reissig=2003,Hirosawa-Reissig=2004}.

\begin{enumerate}
\item For $\alpha=1$, a fixed finite loss of derivatives may occur even for coefficients depending only on $t$.
For spatially Lipschitz coefficients, such a loss may lower the regularity of $\nabla u$ beyond the range in which multiplication by Lipschitz functions is bounded.
It remains to determine whether an energy estimate with a finite loss can be obtained under (H1)--(H2) with $\alpha=1$ in a suitable range of Sobolev indices.
\item We propose to replace (H1)--(H2) by the following spatially log-Lipschitz assumptions, while retaining the symmetry and strict hyperbolicity condition \eqref{Property-akl-1}:
\begin{hypoth}\label{Hypothesis-B}
Let $\alpha\in[0,1)$, and let $a_{kl}: [0,T]\times\mb{R}^n\to\mb{R}$ be twice differentiable with respect to $t$ on $(0,T]\times\mb{R}^n$ and satisfy the following assumptions:
\begin{description}
	\item[(H1$'$)] There exists $C_0>0$ such that for all $k,l\in\{1,\dots,n\}$,
	\begin{align*}
	|a_{kl}(t,x+y)-a_{kl}(t,x)|\leqslant C_0|y|\log\Big(1+\frac{1}{|y|}\Big)
	\end{align*}
	holds for any $t\in[0,T]$ and any $x,y\in\mb{R}^n$ with $|y|\in(0,1)$.
	\item[(H2$'$)] There exist $C_1,C_2,C_3>0$ such that for all $k,l\in\{1,\dots,n\}$,
	\begin{align*}
	|\partial_t^pa_{kl}(t,x)|&\leqslant C_p[h(t;\alpha)]^p\ \ \text{with}\ \ p\in\{1,2\},\\
	|\partial_ta_{kl}(t,x+y)-\partial_ta_{kl}(t,x)|&\leqslant C_3|y|\log\Big(1+\frac{1}{|y|}\Big)h(t;\alpha)
	\end{align*}
	hold for any $t\in(0,T]$ and any $x,y\in\mb{R}^n$ with $|y|\in(0,1)$.
\end{description}
\end{hypoth}
For a fixed $\theta\in(0,1)$, we ask whether there exist $\beta^\ast,K,C>0$ and $T_0\in(0,T]$ with $\theta+\beta^\ast T_0<1$ such that every $u\in\ml{C}^2([0,T_0],H^\infty)$ satisfies
\begin{align}\label{Expected-LogLip-Energy}
&\sup\limits_{t\in[0,T_0]}\Big(\|u(t,\cdot)\|_{H^{1-\theta-\beta^{\ast}t}_{\alpha,K}}+\|\partial_tu(t,\cdot)\|_{H^{-\theta-\beta^{\ast}t}_{\alpha,K}}\Big)\notag\\
&\qquad\leqslant C\Big(\|u(0,\cdot)\|_{H^{1-\theta}}+\|\partial_tu(0,\cdot)\|_{H^{-\theta}}
+\int_0^{T_0}\|\ml{L}u(\tau,\cdot)\|_{H^{-\theta-\beta^{\ast}\tau}}\,\mathrm{d}\tau\Big).
\end{align}
For $\alpha=0$, the estimate \eqref{Expected-LogLip-Energy} follows from \cite[Theorem 2.2]{Colombini-DelSanto-Fanelli=2023}.
\item For coefficients depending only on $t$, we consider the Cauchy problem
\begin{align*}
\partial_t^2u-\sum\limits_{k,l=1}^n a_{kl}(t)\partial_k\partial_lu=0,\ \  u(0,x)=u_0(x),\ \  {\partial_tu(0,x)}=u_1(x).
\end{align*}
Under the time derivative bounds in (H2), the known estimates give an arbitrarily small loss for $0<\alpha<1$ and a finite loss for $\alpha=1$, with the relation to optimal regularity discussed in \cite{Colombini-DelSanto-Reissig=2003}.
We also consider the purely spatial problem
\begin{align*}
\partial_t^2u-\sum\limits_{k,l=1}^n \partial_k \big(a_{kl}(x)\partial_lu\big)=0,\ \  u(0,x)=u_0(x),\ \  {\partial_tu(0,x)}=u_1(x),
\end{align*}
with a bounded, symmetric, uniformly positive coefficient matrix satisfying \begin{align*}
	|a_{kl}(x+y)-a_{kl}(x)|\leqslant C_0|y|\Big(\log\Big(1+\frac{1}{|y|}\Big)\Big)^\beta
	\end{align*}
for $\beta\in(0,1]$.
For the latter problem, the operator $B:=-\divv(A(x)\nabla)$ has a nonnegative self-adjoint realization with form domain $H^1$.
By spectral calculus, interpolation and duality, we obtain
\begin{align*}
\sup_{t\in[0,T]}\big(\|u(t,\cdot)\|_{H^s}
+\|\partial_tu(t,\cdot)\|_{H^{s-1}}\big)
\leqslant C_T\big(\|u_0\|_{H^s}+\|u_1\|_{H^{s-1}}\big)
\end{align*}
for every $0\leqslant s\leqslant1$.
\end{enumerate}

\section*{Acknowledgments}
Wenhui Chen is supported in part by the National Natural Science Foundation of China (grant No. 12301270) and the Guangdong Basic and Applied Basic Research Foundation (grant No. 2025A1515010240). The authors thank Daniele Del Santo for drawing their attention to the early result in \cite{Yamazaki=1990}.

\end{document}